\definecolor{evencell}{gray}{0.8}
\definecolor{oddcell}{gray}{1}
\definecolor{s-cell}{gray}{0}
\definecolor{gcell}{named}{green}
\definecolor{ycell}{named}{yellow}
\definecolor{ccell}{named}{green}
\definecolor{leftcell}{named}{violet}
\definecolor{middlecell}{named}{orange}
\definecolor{rightcell}{named}{teal}
\def\ec{*(evencell) \empty}
\def\oc{*(oddcell) \empty}
\def\SC{*(s-cell) \empty}
\def\gc{*(gcell) G}
\def\yc{*(ycell) Y}
\def\lc{*(leftcell) \empty}
\def\mc{*(middlecell) \empty}
\def\rc{*(rightcell) \empty}
\crefname{figure}{Figure}{Figures}
\numberwithin{equation}{section}
\newcommand{\FindStat}[1]{%
  \ifx&#1&%
  \url{www.findstat.org}
  \else \url{www.findstat.org/#1}
  \fi}%
\newtheorem{theorem}{Theorem}
\newtheorem{proposition}[theorem]{Proposition}
\newtheorem{corollary}[theorem]{Corollary}
\newtheorem{lemma}[theorem]{Lemma}
\newtheorem{construction}{Construction}
\theoremstyle{definition}
\newtheorem{example}[theorem]{Example}
\newtheorem{remark}[theorem]{Remark}
\newcommand{\Dfn}[1]{\emph{\color{blue}#1}} 
\DeclareMathOperator{\wmaj}{wmaj}
\DeclareMathOperator{\gout}{green}
\DeclareMathOperator{\yout}{yellow}
\DeclareMathOperator{\leg}{leg}
\DeclareMathOperator{\arm}{arm}
\DeclareMathOperator{\hook}{hook}
\newcommand{\rem}{{\boldsymbol\rho}}
\newcommand{\pos}{{\boldsymbol\gamma}}
\newcommand{\posi}{{\gamma}}
\newcommand{\lab}[1]{\hbox{${\lambda}\!\downarrow_{#1}$}}
\newcommand{\lau}[1]{\hbox{${\lambda}\!\uparrow_{#1}$}}
\begin{document}

\title{A generalization of conjugation of integer partitions}

\author[Albion]{Seamus Albion$^\dagger$}
\address{$^\dagger$Fakult\"at f\"ur Mathematik, Universit\"at Wien,
Oskar-Morgenstern-Platz~1, Vienna, Austria\newline\tt
\url{http://www.mat.univie.ac.at/~aseamus}\newline
\url{http://www.mat.univie.ac.at/~teisenko}\newline
\url{http://www.mat.univie.ac.at/~ifischer}\newline
\url{http://www.mat.univie.ac.at/~kratt}\newline
\vskip-1.1cm}
\author[Eisenk\"olbl]{Theresia Eisenk\"olbl$^\dagger$}
\author[Fischer]{Ilse Fischer$^\dagger$}
\author[Gangl]{Moritz Gangl$^\dagger$}
\author[H\"ongesberg]{Hans H\"ongesberg$^\ddagger$}
\address{$^\ddagger$Faculty of Mathematics and Physics, University of
Ljubljana, Jadranska 21, Ljubljana, Slovenia\newline\tt
\url{http://hoengesberg.com}\vskip-.1cm}
\author[Krattenthaler]{Christian Krattenthaler$^\dagger$}
\author[Rubey]{Martin Rubey$^*$}
\address{$^*$Fakult\"{a}t f\"{u}r Mathematik und Geoinformation, TU Wien, Vienna, Austria\newline\tt
\url{https://www.dmg.tuwien.ac.at/rubey/}\newline
}
\thanks{S.A., I.F. and C.K. acknowledge support from the Austrian Science Fund (FWF) grant 10.55776/F1002.  M.G.  and H.H. acknowledge support from the FWF grant 10.55776/P34931, and H.H. also acknowledges support from the FWF grant 10.55776/J4810. For open access purposes, the authors have applied a CC BY public copyright license to any author accepted manuscript version arising from this submission.}

\begin{abstract}
    We exhibit, for any positive integer {parameter} $s$, an
    involution on the set of integer partitions of $n$.  These
    involutions show the joint symmetry of the distributions of the
    following two statistics.  The first counts the number of parts of
    a partition 
divisible by $s$, whereas the second counts the number of cells in the
    Ferrers diagram of a partition 
whose leg length is zero and whose arm length has remainder $s-1$ when
    dividing by $s$.  In particular, for $s=1$ this involution is just
    conjugation. 
    Additionally, we provide explicit expressions for the bivariate
    generating functions. 

    Our primary motivation to construct these involutions is that we
    know only of two other ``natural''  bijections on integer
    partitions of a given size, one of which is the Glaisher--Franklin
    bijection sending the set of parts divisible by $s$, each divided
    by $s$, to the set of parts occurring at least $s$ times.
\end{abstract}

\subjclass[2020]{Primary 05A19; Secondary 05A15 05A17 05A30}

\keywords{Partitions of integers, conjugation, $q$-binomial theorem}

\maketitle

\section{Introduction}

Integer partitions are possibly one of the most important families of
objects in combinatorics.  However, it seems that
we do not know of very many bijections on the set of integer
partitions of a given size --- although a large variety of bijections
between sets of partitions with certain properties can be found in
the literature, as witnessed by Pak {in his} survey~\cite{MR2267263}.

Apart from conjugation of the Ferrers diagram, a well-known family of
bijections is due to Glaisher and Franklin,
see~\cite[Sec.~3.3]{MR2267263}, \cite{zbMATH02703427}.\footnote{The
  case $s=2$ is \FindStat{Mp00312}.} For a given positive integer $s$,
it sends the set of parts divisible by $s$, each divided by $s$, to
the set of parts occurring at least $s$ times.

The other family of bijections we know of is due to Loehr and
Warrington~\cite{MR2475023}.  For each rational number $x$, they
describe an involution that interchanges two statistics $h^+_x$ and
$h^-_x$, which count the number of cells in the Ferrers diagram of a
partition satisfying certain constraints on the ratio of arm and leg
length.  These involutions can be combined, for example, to provide a
bijection sending the diagonal inversion number to the length of a
partition.\footnote{\FindStat{Mp00322}}

The purpose of this article is to present a family of involutions
on the set of partitions of a given integer that interchange two statistics
$r_s$ and~$c_s$ (to be defined in the next section), where $s$ is a
positive integer. For $s=1$ we recover the operation of conjugation.

To give an outline,
in the next section we recall standard notation and give
definitions relevant for our considerations. In particular,
there we introduce the announced statistics~$r_s$ and~$c_s$.
Subsequently we present our results. \cref{main} says that there
is an involution on partitions of~$n$ that interchanges the
statistics~$r_s$ and~$c_s$. The theorem is actually much finer as it
leaves the sequence of the non-zero remainders after division of the
parts of the partition by~$s$ invariant. Our second main result is presented in
\cref{coefficient}. It provides an explicit expression for the
generating function $\sum_\lambda q^{|\lambda|}$,
where the sum is over all partitions with
$(r_s(\lambda),c_s(\lambda))=(r,c)$ and a fixed sequence of
non-zero remainders, with $|\lambda|$ denoting the sum of parts of~$\lambda$.
The symmetry in~$r$ and~$c$ is evident from a slight modification of the expression in \cref{coefficient}, see
\cref{rem:1a}. 

\cref{sec:3,sec:4} are devoted to the construction of 
the involution of \cref{main}. The involution is built up step by step. It is
particularly simple if all parts of the partition are divisible
by~$s$, see \cref{const:1} in \cref{sec:empty}. The next case
that we consider is the case of strictly increasing remainder
sequences. The simple idea of \cref{const:1} is enhanced
by the operation of ``removal of remainders"
(see \cref{crucial}) and the concept of the
``remainder diagram".
The result is the more general involution
in \cref{strict} presented in \cref{sec:strict}.
In \cref{sec:4} it is argued that the general case can be reduced to
the case of strictly increasing remainder sequences, see \cref{reduce}.
The resulting complete description of our involution, proving
\cref{main}, is finally summarized in \cref{general}.

Along the way to this involution, we derive in parallel generating
function results, see \cref{zero,inc,gen}, and in particular
\cref{theogen}. (In fact, several ingredients to the involution are
inspired by generating function calculations.)
We complete the proof of \cref{coefficient} in
\cref{sec:5} by simplifying the expression from \cref{theogen}.
We offer actually two proofs of \cref{coefficient}: one
uses a combination of combinatorial arguments and $q$-series
identities, the other is purely combinatorial.
Finally, \cref{rem:1a} provides an alternative way to write the
expression for the generating function of \cref{coefficient},
which reveals the symmetry in $r$ and $c$.

The family of involutions we present here, depending on a positive
integer $s$, was discovered by an automated search for
equidistributed statistics on integer partitions in \FindStat{} such
that there is no accompanying bijection in the database.\footnote{The
case $s=2$ is now \FindStat{Mp00321}.
{\tt SageMath} code implementing the bijection for general $s$ can also be found there.}

\section{Definitions and Results}
\label{sec:2}
A \Dfn{partition} $\lambda$ of a positive integer $n$ is a weakly decreasing sequence of positive integers that add up to $n$. We write $\lambda \vdash n$ and $n$ is also referred to as the \Dfn{size} of~$\lambda$, denoted by  $|\lambda|$.
The number of parts is the \Dfn{length} of the partition, denoted by  $\ell(\lambda)$.
The \Dfn{Ferrers diagram} of $\lambda=(\lambda_1,\dots,\lambda_\ell)$
is the arrangement of left-justified unit boxes, called \Dfn{cells}, with $\lambda_i$ cells in row $i$.
In the following, we often identify the Ferrers diagram with the partition. We use the English convention and matrix coordinates to locate cells in the Ferrers diagram.  By $\lambda'$ we denote the \Dfn{conjugate partition} of~$\lambda$, which is obtained by reflecting the Ferrers diagram about the main diagonal. The \Dfn{leg length} $\leg(z)$ of a cell $z$ in the partition is the number of cells in the same column strictly below
the cell, while the \Dfn{arm length} $\arm(z)$ of a cell is the number of cells in the same row strictly to the right
of the cell. The Ferrers diagrams of
$\lambda=(6,4,4,1)$ and of $\lambda'=(4,3,3,3,1,1)$ are
shown in \cref{fig:1}.

\begin{figure}[ht]
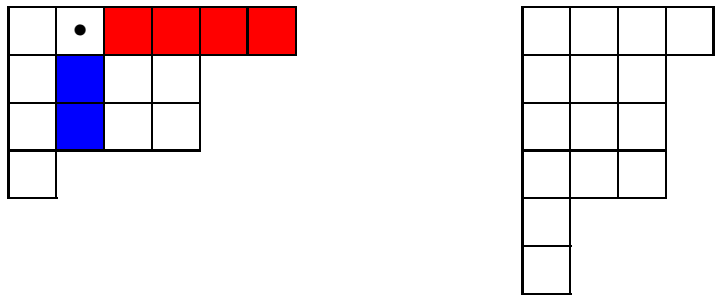

$$
\begin{ytableau}
\empty & \bullet & *(red) \empty  & *(red) \empty & *(red) \empty & *(red) \empty \\
\empty & *(blue) \empty & \empty & \empty \\
\empty & *(blue)  & \empty  & \empty \\
\empty  \\
\end{ytableau}
\hspace{3cm}
\begin{ytableau}
\empty & \empty & \empty & \empty \\
\empty & \empty & \empty \\
\empty & \empty & \empty \\
\empty & \empty & \empty \\
\empty \\
\empty \\
\end{ytableau}
$$
\caption{The Ferrers diagrams of
$\lambda=(6,4,4,1)$ and $\lambda'=(4,3,3,3,1,1)$}
\label{fig:1}
\end{figure}

The cells that contribute to the leg and arm lengths of the cell $(1,2)$ of the Ferrers diagram
of~$\lambda$ are indicated in blue and red, respectively, where the cell in the $i$-th row and $j$-th column is referred to as $(i,j)$.

\medskip

Throughout, we fix a positive integer $s$.  We define the following two statistics on partitions that depend on $s$. We let%
\footnote{We use the letter ``$r$" in $r_s$ and the letter ``$c$" in $c_s$
since, clearly, the first statistics is associated with the rows of the
Ferrers diagram of the partition, and since we think of the latter statistics
to be associated with the columns of the Ferrers diagram.}
\begin{align*}
r_s(\lambda) &= \# \text{ of parts of $\lambda$ divisible by $s$}, \\
c_s(\lambda) &= \# \text{ of cells $z$ in $\lambda$ such that $\leg(z)$ is zero and $\arm(z)+1$ is divisible by $s$.}
\end{align*}
A cell that contributes to $c_s(\lambda)$
is called \Dfn{$s$-cell}.  For example, given $\lambda=(6,4,4,1)$, the $2$-cells are $(1,5)$ and $(3,3)$ and we have
$r_2(\lambda)=3$ and $c_2(\lambda)=2$.

Our main goal is to show that the polynomial
$$\sum_{\lambda \vdash n} R^{r_s(\lambda)} C^{c_s(\lambda)}$$
is symmetric in $R$ and $C$ by constructing an involution on partitions
of $n$ that interchanges the statistics $r_s$ and $c_s$.

We will actually show a {vast} refinement of this statement.
The \Dfn{remainder sequence} of a partition~$\lambda$ modulo~$s$ is the sequence $\rem_s(\lambda)=(\rho_1,\dots, \rho_m)$ of non-zero remainders of the parts of~$\lambda$ when dividing by $s$ and reading $\lambda$ from left to right.
For example, given $\lambda=(12,9,5,4,4,3,2)$, we have $\rem_4(\lambda)=(1,1,3,2)$.  Our involution will fix the remainder sequence of the partition.
As a consequence, we obtain our first main theorem.

\begin{theorem}
\label{main}
Let $s$ and $n$ be positive integers, and let $\rem$ be a vector of integers between $1$ and~$s-1$. Furthermore, let $r$ and $c$ {be} non-negative integers. Then the number of partitions~$\lambda$ of $n$ with $\rem_s(\lambda)=\rem$ and
$(r_s(\lambda),c_s(\lambda))=(r,c)$ is equal to the number of partitions~$\lambda$ of~$n$ with $\rem_s(\lambda)=\rem$ and $(r_s(\lambda),c_s(\lambda))=(c,r)$.
\end{theorem}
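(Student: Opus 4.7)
The plan is to construct an explicit involution $\iota : \lambda \mapsto \lambda^{*}$ on the set of partitions of $n$ which preserves the remainder sequence $\rem_s(\lambda) = \rem$ and interchanges $r_s$ and $c_s$; the theorem follows immediately from the existence of such an involution, and I would follow the three-stage structure previewed in the introduction: empty $\rem$, strictly increasing $\rem$, and finally general $\rem$.

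For the base case $\rem = ()$, every part of $\lambda$ is divisible by $s$, so I write $\lambda = s\mu$ for some partition $\mu$ of $n/s$. From the telescoping identity $c_s(\lambda) = \sum_{i \geq 1} \lfloor (\lambda_i - \lambda_{i+1})/s \rfloor$ (with $\lambda_{\ell+1} := 0$) one checks that $c_s(s\mu) = \mu_1$, while clearly $r_s(s\mu) = \ell(\mu)$. The required involution is therefore rescaled conjugation $s\mu \mapsto s\mu'$, recovering classical conjugation when $s = 1$, in agreement with the authors' remark.

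For a strictly increasing remainder sequence, I would peel the non-zero remainders off $\lambda$ in a canonical way: the remainders, together with their positions in $\lambda$, determine a small staircase-like ``remainder diagram'', and removing it leaves a partition whose parts are all divisible by $s$. Strict increase of $\rem$ is what makes the peeling unambiguous and the re-attachment well defined. One then applies the base-case involution to the divisible-by-$s$ residue and re-attaches the remainder diagram. The point to verify is that the presence of the remainder diagram shifts $c_s$ in a way that is compensated by the fact that $r_s$ is untouched by the peel, so that the composition interchanges the two statistics on the nose; the formula $c_s(\lambda) = \lfloor \lambda_1/s \rfloor - \#\{i : \rho_i < \rho_{i+1}\}$ that drops out of the telescoping above tells one exactly which correction term is needed.

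For the general case, the aim is to reduce to strictly increasing $\rem$. A weak descent $\rho_i \geq \rho_{i+1}$ in the remainder sequence forces the corresponding quotients to satisfy $q_i > q_{i+1}$, which creates extra ``vertical room'' on the quotient side; I would exploit this room to canonically redistribute the divisible-by-$s$ parts so that $\lambda$ splits, along the descents of $\rem$, into blocks each carrying a strictly increasing remainder sequence. The strict-increasing involution is then applied block by block and the pieces are reassembled. The main obstacle is showing that this reduction is canonical and invertible, and that the local interchanges combine into the desired global interchange of $r_s$ and $c_s$; the intermediate generating function identities alluded to in the excerpt would provide a running sanity check as the construction is assembled.
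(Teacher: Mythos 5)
Your three-stage plan (empty, strictly increasing, general remainder sequence) matches the architecture of the paper, and your base case is correct: conjugating the $s$-reduction is exactly \cref{const:1}, and your telescoping identity $c_s(\lambda)=\sum_i\lfloor(\lambda_i-\lambda_{i+1})/s\rfloor$ is a clean way to see $c_s(s\mu)=\mu_1$. Beyond that, however, there are real gaps. In the strictly increasing case you never specify where the peeled remainders are re-attached; the paper's construction records them as green cells at outer corners of the $s$-reduction, conjugates the \emph{augmented} diagram, and re-inserts the remainders in increasing order from top to bottom (\cref{strict}), and the verification you defer rests on the effect of removing the last remainder on $c_s$ (\cref{delete}). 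Moreover, the auxiliary formula you invoke, $c_s(\lambda)=\lfloor\lambda_1/s\rfloor-\#\{i:\rho_i<\rho_{i+1}\}$, is false when $\rho$ is the sequence of \emph{non-zero} remainders as in the paper: for $s=3$ and $\lambda=(3,1)$ one has $\rem_3(\lambda)=(1)$ with no ascent, yet $c_3(\lambda)=0$, not $1$; the identity only holds if ascents are counted over the remainders of all parts, zeros included.

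The general case, which is the actual content of the theorem, is where the proposal does not go through. Your reduction rests on the claim that a weak descent $\rho_i\geq\rho_{i+1}$ forces the corresponding quotients to satisfy $q_i>q_{i+1}$; this is false (two equal parts $5,5$ with $s=3$ give a weak descent with equal quotients), so the ``extra vertical room'' you want to exploit need not exist. More fundamentally, splitting $\lambda$ along the weak descents into blocks with strictly increasing remainder sequences and applying the strict-increasing involution block by block is not obviously well defined --- after reassembly the rows need not be weakly decreasing --- and neither $r_s$ nor $c_s$ is additive over such blocks, so local interchanges do not combine into the global interchange without a genuinely new mechanism. The paper's solution is different: each remainder is coloured green or yellow according to whether its removal changes $c_s$ (\cref{delete}), and an explicit invertible algorithm (\cref{reduce}, modelled on the combinatorial proof of \cref{maj}) converts the extended remainder diagram into one without yellow cells, adding exactly $\wmaj(\rem_s(\lambda))$ interior cells while preserving both statistics (\cref{lem:constr3}); conjugation is performed on that object and the algorithm is then inverted. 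That reduction is the missing idea, so as it stands your proposal does not constitute a proof of the theorem.
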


\begin{example} \label{ex:1}
We choose $s=3$. There are exactly $5$~partitions~$\lambda$ of $37$ with
remainder sequence $(2,1,1,2,1)$, $r_3(\lambda)=2$ and
$c_3(\lambda)=3$, namely
$(15,6,5,4,4,2,1)$,
$(15,8,4,4,3,2,1)$,
$(14,10,4,3,3,2,1)$,
$(17,6,4,4,3,2,1)$, and
$(14,7,7,3,3,2,1)$. Their Ferrers diagrams are shown in \cref{fig:2}.
There, the $3$-cells are the black cells. Furthermore,
blocks of three cells are either white or shaded in order to
facilitate the identification of the row lengths that are divisible
by $s=3$.

\begin{figure}[ht]
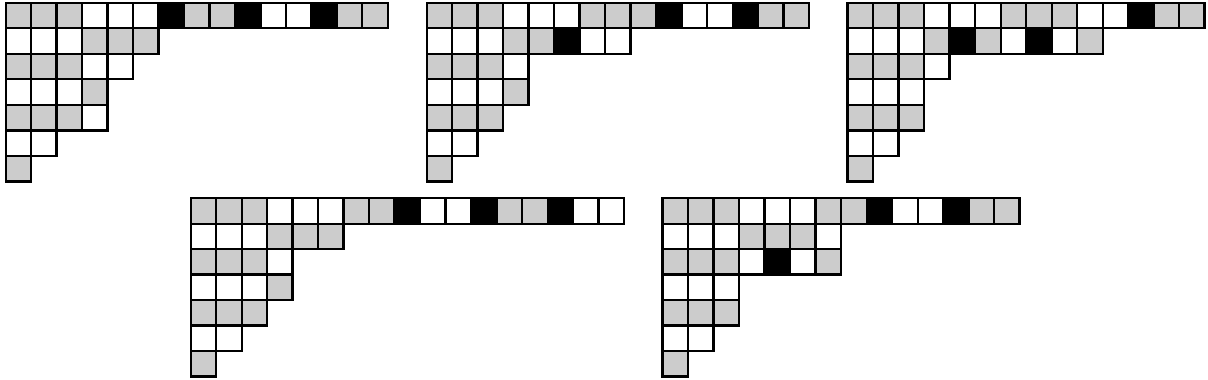

$$
\ytableausetup{smalltableaux}
\begin{ytableau}
\ec & \ec & \ec & \oc & \oc & \oc & \SC & \ec & \ec &
\SC & \oc & \oc & \SC & \ec & \ec \\
\oc & \oc & \oc & \ec & \ec & \ec \\
\ec & \ec & \ec & \oc & \oc \\
\oc & \oc & \oc & \ec \\
\ec & \ec & \ec & \oc \\
\oc & \oc \\
\ec \\
\end{ytableau}
\hspace{.5cm}
\begin{ytableau}
\ec & \ec & \ec & \oc & \oc & \oc & \ec & \ec & \ec &
\SC & \oc & \oc & \SC & \ec & \ec \\
\oc & \oc & \oc & \ec & \ec & \SC & \oc & \oc \\
\ec & \ec & \ec & \oc \\
\oc & \oc & \oc & \ec \\
\ec & \ec & \ec \\
\oc & \oc \\
\ec \\
\end{ytableau}
\hspace{.5cm}
\begin{ytableau}
\ec & \ec & \ec & \oc & \oc & \oc & \ec & \ec & \ec &
\oc & \oc & \SC & \ec & \ec \\
\oc & \oc & \oc & \ec & \SC & \ec & \oc & \SC & \oc & \ec \\
\ec & \ec & \ec & \oc \\
\oc & \oc & \oc \\
\ec & \ec & \ec \\
\oc & \oc \\
\ec \\
\end{ytableau}
$$
$$
\ytableausetup{smalltableaux}
\begin{ytableau}
\ec & \ec & \ec & \oc & \oc & \oc & \ec & \ec & \SC &
\oc & \oc & \SC & \ec & \ec & \SC & \oc & \oc \\
\oc & \oc & \oc & \ec & \ec & \ec \\
\ec & \ec & \ec & \oc \\
\oc & \oc & \oc & \ec \\
\ec & \ec & \ec \\
\oc & \oc \\
\ec \\
\end{ytableau}
\hspace{.5cm}
\begin{ytableau}
\ec & \ec & \ec & \oc & \oc & \oc & \ec & \ec & \SC &
\oc & \oc & \SC & \ec & \ec \\
\oc & \oc & \oc & \ec & \ec & \ec & \oc \\
\ec & \ec & \ec & \oc & \SC & \oc & \ec \\
\oc & \oc & \oc \\
\ec & \ec & \ec \\
\oc & \oc \\
\ec \\
\end{ytableau}
$$
\caption{The partitions $\lambda$ of 37 with remainder sequence
$(2,1,1,2,1)$, $r_3(\lambda)=2$
and $c_3(\lambda)=3$}
\label{fig:2}
\end{figure}

On the other hand, there are exactly $5$~partitions~$\lambda$ of $37$ with
remainder sequence\break $(2,1,1,2,1)$, $r_3(\lambda)=3$ and
$c_3(\lambda)=2$, namely
$(11,10,4,3,3,3,2,1)$,
$(12,8,4,4,3,3,2,1)$,
$(12,6,5,4,4,3,2,1)$,
$(11,7,7,3,3,3,2,1)$, and
$(14,6,4,4,3,3,2,1)$.
Their Ferrers diagrams are shown in \cref{fig:3}.
The shadings in the figure have the same meaning as in \cref{fig:2}.

Note that the fact that all partitions in \cref{fig:2,fig:3} have the same length, respectively, is a direct consequence of fixing the remainder sequence $\rem$ and the statistic $r_s$.

\begin{figure}[ht]
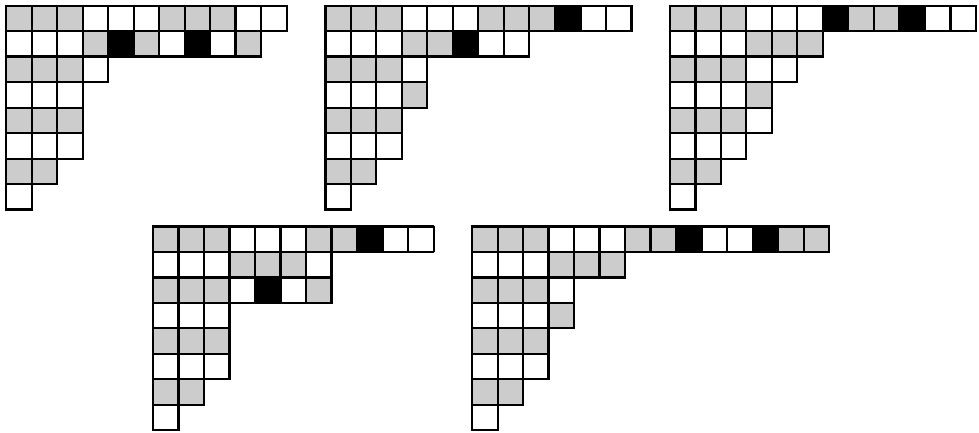

$$
\ytableausetup{smalltableaux}
\begin{ytableau}
\ec & \ec & \ec & \oc & \oc & \oc & \ec & \ec & \ec &
\oc & \oc  \\
\oc & \oc & \oc & \ec & \SC & \ec & \oc & \SC & \oc &
\ec  \\
\ec & \ec & \ec & \oc \\
\oc & \oc & \oc \\
\ec & \ec & \ec \\
\oc & \oc & \oc \\
\ec & \ec \\
\oc \\
\end{ytableau}
\hspace{.5cm}
\begin{ytableau}
\ec & \ec & \ec & \oc & \oc & \oc & \ec & \ec & \ec &
\SC & \oc & \oc \\
\oc & \oc & \oc & \ec & \ec & \SC & \oc & \oc \\
\ec & \ec & \ec & \oc \\
\oc & \oc & \oc & \ec \\
\ec & \ec & \ec \\
\oc & \oc & \oc \\
\ec & \ec \\
\oc \\
\end{ytableau}
\hspace{.5cm}
\begin{ytableau}
\ec & \ec & \ec & \oc & \oc & \oc & \SC & \ec & \ec &
\SC & \oc & \oc \\
\oc & \oc & \oc & \ec & \ec & \ec \\
\ec & \ec & \ec & \oc & \oc \\
\oc & \oc & \oc & \ec \\
\ec & \ec & \ec & \oc \\
\oc & \oc & \oc \\
\ec & \ec \\
\oc \\
\end{ytableau}
$$
$$
\ytableausetup{smalltableaux}
\begin{ytableau}
\ec & \ec & \ec & \oc & \oc & \oc & \ec & \ec & \SC &
\oc & \oc  \\
\oc & \oc & \oc & \ec & \ec & \ec & \oc \\
\ec & \ec & \ec & \oc & \SC & \oc & \ec \\
\oc & \oc & \oc \\
\ec & \ec & \ec \\
\oc & \oc & \oc \\
\ec & \ec \\
\oc \\
\end{ytableau}
\hspace{.5cm}
\begin{ytableau}
\ec & \ec & \ec & \oc & \oc & \oc & \ec & \ec & \SC &
\oc & \oc & \SC & \ec & \ec \\
\oc & \oc & \oc & \ec & \ec & \ec \\
\ec & \ec & \ec & \oc \\
\oc & \oc & \oc & \ec \\
\ec & \ec & \ec \\
\oc & \oc & \oc \\
\ec & \ec \\
\oc \\
\end{ytableau}
$$
\caption{The partitions $\lambda$ of 37 with remainder sequence
$(2,1,1,2,1)$, $r_3(\lambda)=3$
and $c_3(\lambda)=2$}
\label{fig:3}
\end{figure}
\end{example}
\ytableausetup{nosmalltableaux}

Apart from a bijective proof we also present a proof by computation. Both proofs imply the following result.
{In order to state it, recall that} the \Dfn{$q$-binomial coefficient} is defined as
$$
\begin{bmatrix}n \\ k \end{bmatrix}_q = \frac{[n]_q!}{[k]_q!\, [n-k]_q!}
$$
with
$$
[n]_q!=\prod_{i=1}^{n} (1+q+\dots+q^{i-1}) = \prod_{i=1}^n \frac{1-q^i}{1-q}.
$$
We extend the notion of size to finite sequences so that for $\rem=(\rho_1,\dots,\rho_m)$ we have $|\rem|=\rho_1+\dots+\rho_m$. We say that $\rem$ has a \Dfn{weak descent} at position $j$ if $\rho_j\geq\rho_{j+1}$.  Finally, the \Dfn{weak major index} of $\rem$ is the sum of the positions of its weak descents, that is
$$
\wmaj(\rem)= \sum_{j:\rho_j \ge \rho_{j+1}} j.
$$
This is a special case of the so called ``graphical major indices" introduced by Foata and Zeilberger \cite{MR1418752} and further investigated by Clarke and Foata \cite{MR1279073, MR1330539, MR1337139, MR1399503} as well as Foata and 
one of the authors \cite{MR1399758}. Using the language of 
the articles by Clarke and Foata, the ``weak major index" is the major index defined solely on ``large" letters.

Our announced generating function result is the following.

\begin{theorem}
\label{coefficient}
Let $s$ be a positive integer, $\rem$ be a vector of integers between $1$ and~$s-1$ of length~$m$, and $r,c$ be non-negative integers. The generating function with respect to the
weight $q^{|\lambda|}$ of partitions~$\lambda$ with $\rem_s(\lambda)=\rem$ and
$(r_s(\lambda),c_s(\lambda))=(r,c)$ is
$$
q^{|\rem|}  Q^{-\wmaj(\rem)+\binom{m}{2}+r+c} \bigg(\begin{bmatrix} r+m-1 \\ m-1 \end{bmatrix}_Q\begin{bmatrix} r+c+m-2 \\ c\end{bmatrix}_Q
    +Q^{m-1}\begin{bmatrix} r+m \\ m \end{bmatrix}_Q
\begin{bmatrix} r+c+m-2 \\ c-1 \end{bmatrix}_Q \bigg),
$$
where $Q=q^s$.
\end{theorem}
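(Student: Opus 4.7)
The plan is to parametrise any partition $\lambda$ with $\rem_s(\lambda)=\rem$ by the pair $(\mu,L)$, where $\mu_i:=\lfloor\lambda_i/s\rfloor$ is the sequence of quotients and $L_i:=\lambda_i-s\mu_i$ is the sequence of labels; so $L$ is an interleaving of $\rem$ with $r$ zeros placed at the positions of parts divisible by~$s$. A direct check shows that ``$\lambda$ is a partition'' is equivalent to: $\mu$ is a weakly decreasing sequence of length $\ell:=r+m$, strictly decreasing at each position $i$ where $L_i<L_{i+1}$ (an ascent of $L$), and $\mu_\ell\geq 1$ whenever $L_\ell=0$. Summing $\lfloor(\lambda_i-\lambda_{i+1})/s\rfloor$ row by row telescopes to $c_s(\lambda)=\mu_1-|S(L)|$, where $S(L)$ is the set of ascent positions of~$L$. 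Also $|\lambda|=s|\mu|+|\rem|$ and $r_s(\lambda)=r$.

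For a fixed label sequence $L$, I would sum over admissible $\mu$ via the substitution $\tilde\mu_i:=\mu_i-|\{j\in S(L):j\geq i\}|$, which converts the strict inequalities to weak ones, leaves the boundary condition on $\tilde\mu_\ell$ intact, and gives $\tilde\mu_1=c$ together with $|\mu|=|\tilde\mu|+\sum_{j\in S(L)} j$. The residual sum over weakly decreasing $\tilde\mu$ of length $\ell$ with first entry~$c$ (and last entry $\geq 0$ or $\geq 1$, according as $L_\ell$ is nonzero or zero) is the standard $q$-binomial count of partitions in a box, evaluating to $Q^c\qbinom[Q]{r+c+m-1}{c}$ or $Q^{r+c+m-1}\qbinom[Q]{r+c+m-2}{c-1}$, respectively. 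Summing over $L$ and splitting by the value of $L_\ell$ yields
$$q^{|\rem|}\Bigl(Q^c\qbinom[Q]{r+c+m-1}{c} F_A+Q^{r+c+m-1}\qbinom[Q]{r+c+m-2}{c-1} F_B\Bigr),$$
where $F_A:=\sum_{L:\,L_\ell>0}Q^{\sum_{j\in S(L)} j}$ and $F_B:=\sum_{L:\,L_\ell=0}Q^{\sum_{j\in S(L)} j}$.

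The main obstacle is to establish
$$F_A=Q^{\binom{m}{2}-\wmaj(\rem)+r}\qbinom[Q]{r+m-1}{m-1},\qquad F_B=Q^{\binom{m}{2}-\wmaj(\rem)}\qbinom[Q]{r+m-1}{m}.$$
I plan to prove these simultaneously by induction on $m+r$ using the recurrences $F_B(\rem,r)=F_A(\rem,r-1)+F_B(\rem,r-1)$ (obtained by deleting a trailing zero from $L$) and $F_A(\rem,r)=Q^{(r+m-1)[\rho_{m-1}<\rho_m]}F_A(\rem',r)+Q^{r+m-1}F_B(\rem',r)$, where $\rem':=(\rho_1,\dots,\rho_{m-1})$ (obtained by deleting the final non-zero label $\rho_m$). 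The essential bookkeeping is that $\wmaj(\rem)=\wmaj(\rem')+(m-1)$ when $\rho_{m-1}\geq\rho_m$ and $\wmaj(\rem)=\wmaj(\rem')$ otherwise; in both cases the inductive step reduces to a single application of $q$-Pascal.

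Substituting the closed forms for $F_A,F_B$ produces
$$q^{|\rem|}Q^{\binom{m}{2}-\wmaj(\rem)+r+c}\Bigl(\qbinom[Q]{r+m-1}{m-1}\qbinom[Q]{r+c+m-1}{c}+Q^{m-1}\qbinom[Q]{r+m-1}{m}\qbinom[Q]{r+c+m-2}{c-1}\Bigr),$$
which is brought into the form of \cref{coefficient} by applying the $q$-Pascal rule $\qbinom[Q]{r+c+m-1}{c}=\qbinom[Q]{r+c+m-2}{c}+Q^{r+m-1}\qbinom[Q]{r+c+m-2}{c-1}$ to the first term, and then using $\qbinom[Q]{r+m}{m}=Q^r\qbinom[Q]{r+m-1}{m-1}+\qbinom[Q]{r+m-1}{m}$ to consolidate the coefficient of $\qbinom[Q]{r+c+m-2}{c-1}$.
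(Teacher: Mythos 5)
Your proposal is correct, and it takes a genuinely different route from the paper's. I checked the key steps: the characterisation of partitions with $\rem_s(\lambda)=\rem$ as pairs $(\mu,L)$ with strict decrease of $\mu$ at ascents of $L$, the telescoping identity $c_s(\lambda)=\mu_1-|S(L)|$ (via $c_s(\lambda)=\sum_i\lfloor(\lambda_i-\lambda_{i+1})/s\rfloor$), the box count after the shift $\tilde\mu_i=\mu_i-\#\{j\in S(L):j\ge i\}$, the recurrences and closed forms for $F_A,F_B$, and the final two applications of $q$-Pascal; everything goes through (only the base cases of the induction, e.g.\ $r=0$ or $m=1$, and the degenerate form of the $F_A$-recurrence when $\rem'$ is empty are left implicit, but these are routine). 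The paper instead first builds the bivariate generating function in $R$ and $C$ (\cref{zero}, \cref{delete}, \cref{gen}, \cref{maj}, culminating in \cref{theogen}) and then either extracts the coefficient of $R^rC^c$ using the $Q$-binomial theorem and the summation \eqref{eq:simplesum}, or argues combinatorially through the reduction to remainder diagrams without yellow cells (\cref{reduce}, \cref{lem:constr3}). You fix $(r,c)$ from the outset and never form the bivariate series: the statistic $c_s$ is encoded by the condition $\tilde\mu_1=c$, which turns the sum over quotient sequences directly into a $q$-binomial box count, and the entire dependence on $\rem$ is isolated in the sums $F_A,F_B$ over label words. In fact $\sum_{j\in S(L)}j={\mathbf d}(\rem,\pos)\cdot(\pos-\mathbf 1)$, so your closed forms for $F_A$ and $F_B$ are equivalent in content to \cref{maj} combined with \eqref{qbin} and \eqref{eq:simplesum}, though proved by a different ($q$-Pascal) induction; your case split $L_\ell>0$ versus $L_\ell=0$ also mirrors the two cases in the paper's second, combinatorial proof. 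What each approach buys: the paper's longer route produces the intermediate results and the remainder-diagram machinery needed for the involution proving \cref{main}, whereas your argument is shorter and self-contained for \cref{coefficient} alone but yields no bijection.
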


A surprising feature of the formula is that the dependence on the
remainder sequence~$\rem$ is only in the exponent of~$q$ in front of the
expression. This ``almost-independence" from~$\rem$ is explained by
the bijection of \cref{reduce}.

In the following corollary, we provide an alternative way to write the expression in Theorem~\ref{coefficient} from which the symmetry in $r$ and $c$ expressed in \cref{main}
is obvious.

\begin{corollary}\label{rem:1a}
The generating function in \cref{coefficient} is equal to
$$
q^{|\rem|} Q^{-\wmaj(\rem)+\binom{m}{2}+r+c}
\bigg(\frac {[r+c+m-1]_Q!} {[r]_Q!\,[c]_Q!\,[m-1]_Q!}
+Q^{m-1}\frac {[r+c+m-2]_Q!} {[r-1]_Q!\,[c-1]_Q!\,[m]_Q!}
\bigg).
$$
\end{corollary}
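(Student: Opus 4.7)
The prefactor $q^{|\rem|}Q^{-\wmaj(\rem)+\binom{m}{2}+r+c}$ is identical in both expressions, so the plan is to show that the two parenthesised expressions are equal by a direct algebraic simplification that reduces, in the end, to a one-line $q$-integer identity. No further combinatorics is required.

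The first step is to expand each $q$-binomial coefficient appearing in \cref{coefficient} as a ratio of $q$-factorials. Two cancellations occur immediately: in
$\begin{bmatrix} r+m-1 \\ m-1 \end{bmatrix}_Q\begin{bmatrix} r+c+m-2 \\ c\end{bmatrix}_Q$ the factor $[r+m-2]_Q!$ cancels, leaving a single $[r+m-1]_Q$ in the numerator; and in $\begin{bmatrix} r+m \\ m \end{bmatrix}_Q\begin{bmatrix} r+c+m-2 \\ c-1 \end{bmatrix}_Q$ the factor $[r+m-1]_Q!$ cancels, leaving $[r+m]_Q$. After pulling out the common denominator $[r]_Q!\,[c]_Q!\,[m]_Q!$ and the factor $[r+c+m-2]_Q!$, the claimed equality reduces to the numerical identity
\begin{equation*}
[r+m-1]_Q\,[m]_Q + Q^{m-1}\,[r+m]_Q\,[c]_Q
\;=\;[r+c+m-1]_Q\,[m]_Q + Q^{m-1}\,[r]_Q\,[c]_Q.
\end{equation*}
The right-hand side of the corollary is obtained by the analogous expansion; the same identity appears.

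The final step is to verify this identity. Rearranging it gives
\begin{equation*}
\bigl([r+c+m-1]_Q-[r+m-1]_Q\bigr)[m]_Q
\;=\;Q^{m-1}\bigl([r+m]_Q-[r]_Q\bigr)[c]_Q.
\end{equation*}
Using $[a+b]_Q-[a]_Q=Q^{a}[b]_Q$, both sides equal $Q^{r+m-1}[c]_Q[m]_Q$, which settles the claim. The only ``obstacle'' here is bookkeeping: being careful about which $q$-factorial cancels against which, and keeping track of the factor $Q^{m-1}$ when the telescoping of $q$-integers is applied. Since everything is symmetric in the roles that $r$ and $c$ play in the rewritten form on the right, the symmetry in $r$ and $c$ asserted in \cref{main} is manifest from the corollary.
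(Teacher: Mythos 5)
Your proof is correct and follows essentially the same route as the paper: both expand the $q$-binomial coefficients into $q$-factorials and verify the identity by elementary algebra resting on the telescoping relation $[a+b]_Q-[a]_Q=Q^{a}[b]_Q$. The only difference is organizational — you clear the common denominator $[r]_Q!\,[c]_Q!\,[m]_Q!$ and reduce everything to a single $q$-integer identity, whereas the paper rewrites each summand separately and cancels the resulting cross terms — so the underlying computation is the same.
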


\begin{example}
If we choose $s=3$, $m=5$, $\rem=(2,1,1,2,1)$, and $(r,c)=(2,3)$
(respectively $(r,c)=(3,2)$) in the formula of \cref{coefficient},
then we obtain
\begin{multline*}
q^{7}  q^{3\cdot(-7+10+2+3)} \bigg(\begin{bmatrix} 6 \\ 4 \end{bmatrix}_{q^3}\begin{bmatrix} 8 \\ 3\end{bmatrix}_{q^3}
    +q^{3\cdot4}\begin{bmatrix} 7 \\ 5 \end{bmatrix}_{q^3}
\begin{bmatrix} 8 \\ 2 \end{bmatrix}_{q^3} \bigg)\\
=q^{31}\left(1+2q^3+5q^6+9q^9+17q^{12}+\dots+16 q^{66}+
9q^{69}+5q^{72}+2q^{75}+q^{78}\right).
\end{multline*}
In particular, the coefficient of $q^{37}$ in this polynomial
equals~5, corresponding to the five partitions for each of
$(r,c)=(2,3)$ and $(r,c)=(3,2)$ in \cref{ex:1}.
\end{example}

\begin{remark} \label{rem:BF}
	It is worth pointing out that the statistic $c_s$ occurred earlier in an algebraic context as a special case of a more general statistic.  Given integers $\alpha\geq 1$ and $\beta\geq 0$ and a partition $\lambda$, define the set 
	\[
	\mathrm{bf}_{\alpha,\beta}(\lambda)
	\coloneq \{z\in\lambda : 
	\text{$\alpha \leg(z)=\beta(\arm(z)+1)$ and $\hook(z)\equiv 0 \mod{(\alpha+\beta)}$}\},
	\]
	where $\hook(z)=\arm(z) + \leg(z) + 1$ is the usual \Dfn{hook length}.  Furthermore let
	$\mathrm{BF}_{\alpha,\beta}(\lambda)\coloneq|\mathrm{bf}_{\alpha,\beta}(\lambda)|$.  Then $c_s(\lambda) = \mathrm{BF}_{s,0}(\lambda)$.
	
	This statistic was first defined by Buryak and Feigin \cite{BF13}
	in the case that $\alpha$ and $\beta$ are coprime, and extended to arbitary
	$\alpha$ and $\beta$ in joint work with Nakajima \cite{BFN15}. Using the  standard notation for \Dfn{$q$-shifted factorials}
	\[
	(a;q)_n \coloneq \prod_{i=0}^{n-1}(1-a q^i) \quad\text{and}\quad (a;q)_\infty \coloneq \prod_{i=0}^{\infty}(1-a q^i),
	\]
	they compute the generating function 
	\[
	\sum_{\lambda}t^{\mathrm{BF}_{\alpha,\beta}(\lambda)}q^{|\lambda|}
	=\frac{(q^{\alpha+\beta};q^{\alpha+\beta})_\infty}
	{(q;q)_{\infty}(tq^{\alpha+\beta};q^{\alpha+\beta})_\infty}
	\]
	of all integer partitions~$\lambda$, which shows that $\mathrm{BF}_{\alpha,\beta}(\lambda)$ and 
	$\mathrm{BF}_{\alpha+\beta,0}(\lambda)$ are equidistributed over partitions of $n$,
	since the right-hand side depends only on $\alpha+\beta$.
	Note that the right-hand side is also the generating function of partitions where $t$ counts the number of parts divisible by $\alpha+\beta$.
	
	The main result of both \cite{BF13} and \cite{BFN15} is the computation of the Poincar\'e polynomial of certain Hilbert schemes, called quasihomogeneous Hilbert schemes. These polynomials may be expressed as generating functions
	for the statistic $\mathrm{BF}_{\alpha,\beta}(\lambda)$ summed over all partitions $\lambda$ of $n$.
	See also \cite{Vid23} and \cite{WW20} for further explanation.

	In \cite{Vid23}, Vidalis relates the statistic 
	$\mathrm{BF}_{\alpha,\beta}(\lambda)$ and the statistics 
	$h_x^+(\lambda)$ and $h_x^-(\lambda)$ of Loehr and Warrington
	through a generalisation, denoted by $h_{x,s}^\pm(\lambda)$.
	It is shown that
	the two statistics $h_{x,s}^+(\lambda)$ and $h_{x,s}^-(\lambda)$ have symmetric
	joint distribution when restricted to partitions with a fixed $s$-core. Moreover, if $x$ is a rational number of the form
	$\alpha/\beta$ with $\alpha+\beta$ dividing~$s$, then $h_{x,s}^+(\lambda)$
	has the same distribution as $c_s$.
	The statistics $r_s$ and $c_s$, however, do not have a symmetric joint distribution when restricted to partitions with a fixed $s$-core as the following counterexample shows: A partition is called an \Dfn{$s$-core} if none of the hook lengths are divisible by $s$. Among the partitions of $6$, there is exactly one $2$-core, namely $\lambda=(3,2,1)$. For this partition we have $r_2(\lambda)=1$ and $c_2(\lambda)=0$.
\end{remark}

\section{Some special cases}
\label{sec:3}

The purpose of this section is to define the involution of \cref{main}
in two simpler cases: first for the case where all parts of the
partitions are divisible by~$s$ (see \cref{const:1}), and then for the
more general case where the non-zero remainders that the parts leave after
division by~$s$ are in increasing order (see \cref{strict}).
Moreover, we provide the necessary auxiliary results that imply that the
constructed mappings are indeed involutions and have the desired
properties in relation to the statistics~$r_s$ and~$c_s$.
Finally, working towards the proof of \cref{coefficient}, we also
provide corresponding generating function results, the upshot
being \cref{inc}.

\subsection{Bijective proof for the case of the empty remainder sequences.}
\label{sec:empty}
In the special case where the remainder sequence of $\lambda=(\lambda_1,\dots,\lambda_\ell)$ modulo~$s$ is empty, each row of the Ferrers diagram can be
partitioned into segments of length~$s$. We shrink each of these segments to one cell, i.e., we consider the partition
$(\frac{\lambda_1}{s},\frac{\lambda_2}{s},\dots,\frac{\lambda_l}{s})$. Then $r_s(\lambda)$ is the number of rows and $c_s(\lambda)$ is the number of columns of the shrunk partition. In this case, conjugation of the shrunk diagram and 
subsequent expansion of each cell again into
a row segment of length~$s$ give the involution.

This involution is also the basis for the general case.  To describe it
formally, we introduce some notation.  On the one hand, let
$$
\lab{s} =\left( \lfloor \lambda_1/s \rfloor, \dots,  \lfloor \lambda_\ell/s \rfloor \right).
$$
We call $\lab{s}$ the \Dfn{$s$-reduction} of~$\lambda$. On the other hand, let
$$
\lau{s} = \left( s \cdot \lambda_1,\dots,  s \cdot \lambda_\ell \right)
$$
and call $\lau{s}$  the \Dfn{$s$-blow-up} of~$\lambda$.

We have $\lau{s}\! \downarrow_s = \lambda$
for every partition~$\lambda$.  We also have
$\lab{s}\! \uparrow_s = \lambda$
if and only if $\lambda$ has empty remainder sequence modulo~$s$. 
{The involution on partitions with empty remainder sequence can now be stated as follows.}

\begin{construction}[\sc Empty remainder sequence] \label{const:1}
Let $\lambda$ be a partition with empty remainder sequence. We define the mapping
  $$\lambda \mapsto [\lab{s}]' \!\uparrow_s.$$
\end{construction}
Our reasoning above demonstrates that \cref{const:1} is an involution on
partitions with empty remainder sequence modulo~$s$ that interchanges the statistics $r_s$ and $c_s$.

The discovery of the general involution, proving 
\cref{main}, was inspired by generating function considerations
that led to a proof of \cref{coefficient}, as indicated throughout the presentation.  The preceding construction thus corresponds to the statement of the following lemma.  
To this end, recall the definition of $q$-shifted factorials introduced earlier in \cref{rem:BF} as
\[
(a;q)_n \coloneq \prod_{i=0}^{n-1}(1-a q^i).
\]

\begin{lemma}
\label{zero}
The generating function with respect to the weight $R^{r_s(\lambda)} C^{c_s(\lambda)} q^{|\lambda|}$
of partitions~$\lambda$ with empty remainder sequence is given by
$$
1 + \sum_{k \ge 1} R^k \frac{C Q^{k}}{(C Q; Q)_k},
$$
where $Q=q^s$.
\end{lemma}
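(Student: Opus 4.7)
The plan is to reduce \cref{zero} to a direct generating-function computation on all integer partitions via the $s$-reduction. Since every part of such a $\lambda$ is divisible by $s$, the map $\lambda \mapsto \lab{s}$ is a bijection between partitions with empty remainder sequence modulo~$s$ and arbitrary partitions $\mu$, with inverse given by multiplying every part by $s$. Under this bijection $|\lambda| = s|\mu|$ (so $q^{|\lambda|} = Q^{|\mu|}$) and $r_s(\lambda) = \ell(\mu)$ is tautological.

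The one substantive identification is $c_s(\lambda) = \mu_1$. A cell $(i,j)$ of $\lambda$ with $\leg = 0$ satisfies $\lambda_{i+1} < j \le \lambda_i$ and has arm length $\lambda_i - j$; hence the $s$-cell condition $\arm + 1 \equiv 0 \pmod s$ becomes $j \equiv \lambda_i + 1 \pmod s$. Since every $\lambda_i$ is a multiple of $s$, this reads $j \equiv 1 \pmod s$, and row~$i$ contributes exactly $(\lambda_i - \lambda_{i+1})/s = \mu_i - \mu_{i+1}$ such cells; telescoping gives $c_s(\lambda) = \mu_1$. This is the formal counterpart of the ``shrinking'' picture described at the start of the subsection.

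It remains to evaluate
$$
\sum_\mu R^{\ell(\mu)} C^{\mu_1} Q^{|\mu|}
= 1 + \sum_{k \ge 1} R^k \sum_{\mu:\,\ell(\mu)=k} C^{\mu_1} Q^{|\mu|}.
$$
For fixed $k \ge 1$ I would pass to the differences $d_i = \mu_i - \mu_{i+1}$ (with $\mu_{k+1} = 0$), so that $d_i \ge 0$ for $i < k$, $d_k \ge 1$, $\mu_1 = \sum_{i=1}^k d_i$, and $|\mu| = \sum_{i=1}^k i\, d_i$. The weight factorises as $C^{\mu_1} Q^{|\mu|} = \prod_{i=1}^k (CQ^i)^{d_i}$, so the inner sum is just a product of geometric series and evaluates to $\frac{CQ^k}{1-CQ^k} \prod_{i=1}^{k-1} (1 - CQ^i)^{-1} = CQ^k/(CQ;Q)_k$, exactly the claimed summand.

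There is no real obstacle: the argument is a chain of elementary steps. The only point that requires a small amount of care is the row-by-row count establishing $c_s(\lambda) = \mu_1$; once that is in hand, the rest is a routine geometric-series calculation, and as a pleasant by-product the resulting formula is manifestly symmetric in $R$ and $C$ (after swapping the order of summation), in agreement with the $s=1$ case of \cref{main}.
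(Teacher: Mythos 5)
Your proposal is correct and follows essentially the same route as the paper: reduce via the $s$-shrinking bijection to counting all partitions by number of rows and number of columns, then evaluate the resulting generating function (your geometric series over the differences $d_i=\mu_i-\mu_{i+1}$ is just the paper's decomposition into the first column plus rectangles with a fixed number of rows, written out algebraically). The only addition is your explicit row-by-row verification that $c_s(\lambda)=\mu_1$, which the paper leaves implicit in the shrinking picture.
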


\begin{proof}
By shrinking row segments of length~$s$ as above, it suffices to compute the generating function of all partitions~$\lambda$ with respect to the weight
$$
R^{\# \text{ of rows of $\lambda$}} C^{\# \text{ of columns of $\lambda$}} q^{|\lambda|}
$$
and then replace $q$ by $Q=q^s$, which takes care of expanding the cells into row segments again.

The weight of the empty partition is $1$.  We show that the
generating function of non-empty partitions of length $k$ with respect to the above weight is
\[
R^k \frac{C q^{k}}{\prod_{i=1}^{k}(1-C q^{i})}.
\]
Indeed, the weight of the first column of $\lambda$, necessarily of
length $k$, is $R^k C q^k$, whereas $\frac{1}{1-Cq^i}$ is the
generating function of rectangular partitions with exactly $i$ rows.
\end{proof}

\subsection{A crucial operation: 
removal of the final non-zero remainder}
\label{crucial}

In the following, we also need to keep track of the positions of 
parts with a non-zero remainder modulo~$s$ in a partition~$\lambda$.  
We define the \Dfn{row position sequence} $\pos_s(\lambda)=(\posi_1,\dots,\posi_m)$ to be the sequence of indices $1 \le \posi_1 < \dots < \posi_m$ such that $\lambda_{\posi_j}$ has non-zero remainder after division by $s$.  Let
$
\Delta_s \lambda
$
be the partition we obtain by deleting the
last $\rho_m$ cells in the $\posi_m$-th row of the Ferrers diagram of~$\lambda$.
\begin{lemma}
  \label{delete}
  Let $\lambda$ be a partition with remainder sequence
  $\rem_s(\lambda) = (\rho_1,\dots,\rho_m)$ and row position sequence
  $\pos_s(\lambda) = (\posi_1,\dots,\posi_m)$, $m\geq 1$.  Then
  \[
  c_s(\Delta_s \lambda)
  =
  \begin{cases}
    c_s(\lambda), & \text{if $m=1$ and $\posi_1=1$, or $m >1$ and $\posi_{m-1}=\posi_{m}-1$ and $\rho_{m-1} \ge \rho_m$},\\
    c_s(\lambda)+1,
 & \text{otherwise}.
  \end{cases}
  \]
\end{lemma}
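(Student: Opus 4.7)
\medskip

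\noindent\textbf{Proof plan for \cref{delete}.}
The plan is to reduce the statement to a completely local computation by writing $c_s$ as a sum of row contributions that depend only on consecutive part sizes. For a cell $(i,j)$ one has $\leg(i,j)=0$ exactly when $j>\lambda_{i+1}$ (with $\lambda_{\ell+1}:=0$), and $\arm(i,j)+1\equiv 0\pmod s$ exactly when $j\equiv\lambda_i+1\pmod s$. Counting columns $j\in\{\lambda_{i+1}+1,\dots,\lambda_i\}$ in this residue class yields
$$
c_s(\lambda)=\sum_{i\ge 1}\left\lfloor\frac{\lambda_i-\lambda_{i+1}}{s}\right\rfloor.
$$
Writing $i:=\posi_m$, the operation $\Delta_s$ replaces only the single entry $\lambda_i$ by $\lambda_i-\rho_m$, so at most the two summands indexed by $i$ and $i-1$ change.

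Next I handle the $i$-th summand. Since $\posi_m$ is the last position with non-zero remainder, every row below row $\posi_m$ has length divisible by $s$; in particular $\lambda_{i+1}\equiv 0\pmod s$. Hence $B:=\lambda_i-\lambda_{i+1}$ satisfies $B\equiv\rho_m\pmod s$ with $0<\rho_m<s$, so writing $B=ks+\rho_m$ one has $\lfloor B/s\rfloor=\lfloor(B-\rho_m)/s\rfloor=k$. Thus the $i$-th summand is unchanged, and the entire variation $c_s(\Delta_s\lambda)-c_s(\lambda)$ equals
$$
\Bigl\lfloor\tfrac{A+\rho_m}{s}\Bigr\rfloor-\Bigl\lfloor\tfrac{A}{s}\Bigr\rfloor
\quad\text{where }A:=\lambda_{i-1}-\lambda_i,
$$
provided $i>1$; otherwise the contribution is vacuously $0$, which already disposes of the sub-case $m=1$, $\posi_1=1$.

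Finally, assume $i>1$ and split according to the remainder of $\lambda_{i-1}$. If either $m=1$ or $\posi_{m-1}<i-1$, then $\lambda_{i-1}\equiv 0\pmod s$, so $A\equiv -\rho_m\equiv s-\rho_m\pmod s$; writing $A=ks+(s-\rho_m)$ gives $A+\rho_m=(k+1)s$ and the difference of floors is $1$. If $\posi_{m-1}=i-1$, then $\lambda_{i-1}\equiv\rho_{m-1}\pmod s$ and $A\equiv\rho_{m-1}-\rho_m\pmod s$. When $\rho_{m-1}\ge\rho_m$ the representative lies in $[0,s)$, giving $A=ks+(\rho_{m-1}-\rho_m)$ and $A+\rho_m=ks+\rho_{m-1}$, both with floor $k$, so the difference is $0$. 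When $\rho_{m-1}<\rho_m$ one instead has $A=ks+(s+\rho_{m-1}-\rho_m)$ and $A+\rho_m=(k+1)s+\rho_{m-1}$, giving difference $1$. Matching these four cases against the statement of the lemma completes the argument.

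The main (and only) care point is the bookkeeping of residues in the last step: one must recognise that the sign of $\rho_{m-1}-\rho_m$ determines whether the canonical representative of $A\bmod s$ lies in $[0,\rho_m)$ or in $[\rho_m,s)$, which is precisely what decides whether adding $\rho_m$ crosses a multiple of $s$. No difficulty beyond this careful residue analysis is expected.
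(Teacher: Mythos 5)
Your proof is correct and follows essentially the same route as the paper: both localize the effect of $\Delta_s$ to rows $\posi_m$ and $\posi_m-1$ (noting rows below $\posi_m$ are divisible by $s$) and then do a residue/floor computation split into the cases $\posi_{m-1}<\posi_m-1$, $\posi_{m-1}=\posi_m-1$ with $\rho_{m-1}<\rho_m$, and $\posi_{m-1}=\posi_m-1$ with $\rho_{m-1}\ge\rho_m$. Your explicit identity $c_s(\lambda)=\sum_{i\ge 1}\lfloor(\lambda_i-\lambda_{i+1})/s\rfloor$ is just a cleaner packaging of the row-by-row count the paper uses implicitly.
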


Before providing the proof, we illustrate the result with the help of an example: Setting $s=3$, we have $c_s(\Delta_s \lambda) = c_s(\lambda)+1$ for the
partition on the left in \cref{fig:4}, and  $c_s(\Delta_s \lambda) = c_s(\lambda)$ for the partition on the right.  The non-zero remainders are 
indicated in green.
\begin{figure}[ht]
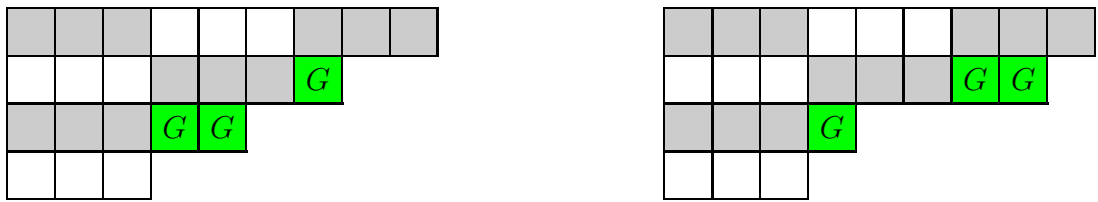

$$
\begin{ytableau}
\ec & \ec & \ec & \oc & \oc & \oc & \ec & \ec & \ec \\
\oc & \oc & \oc & \ec & \ec & \ec & \gc \\
\ec & \ec & \ec & \gc & \gc \\
\oc & \oc & \oc
\end{ytableau}
\hspace{3cm}
\begin{ytableau}
\ec & \ec & \ec & \oc & \oc & \oc & \ec & \ec & \ec \\
\oc & \oc & \oc & \ec & \ec & \ec & \gc & \gc \\
\ec & \ec & \ec & \gc \\
\oc & \oc & \oc
\end{ytableau}
$$
\caption{Example partitions for \cref{delete}}
\label{fig:4}
\end{figure}

\begin{proof}
First note that the case where $m=\gamma_1=1$ is immediate.
From now on we tacitly assume that we are not in this case.

Next we observe that
the deletion only has an effect on the $s$-cells in rows $\posi_m-1$ and $\posi_m$. However, since the lengths of the rows below row $\posi_m$ are all divisible by $s$, the number of $s$-cells in row $\posi_m$ does not change. If $\posi_{m-1}<\posi_m-1$, then there is one more $s$-cell in row $\posi_{m}-1$ of $\Delta_s \lambda$ than in the same row
of~$\lambda$. More formally, if $\posi_{m-1}<\posi_m-1$, then the length of row~$\pos_m-1$ is divisible by $s$. So assume that there are $k s+\rho_m$ cells in row~$\posi_m$ and $l s$ cells in row~$\pos_m-1$ of $\lambda$ for some integers $k$ and $l$ satisfying $k<l$. Then there are $\lfloor \frac{l s - k s -\rho_m}{s} \rfloor = l-k-1$ $s$-cells in row~$\posi_m-1$ since $0 < \frac{\rho_m}{s} < 1$. But after removing $\rho_m$ cells in row~$\posi_m$, we obtain $\lfloor \frac{l s - k s}{s} \rfloor = l-k$ $s$-cells in row~$\posi_m-1$.
This is still true if $\posi_{m-1}=\posi_m-1$ and $\rho_{m-1} < \rho_m$.
To see this, assume that, while row~$\posi_m$ has $k s+\rho_m$ cells, row~$\posi_{m-1}$ now has $l s + \rho_{m-1}$ cells. Since $\rho_{m-1} < \rho_{m}$, we still have $k<l$. Then there are $\lfloor \frac{l s + \rho_{m-1} - k s -\rho_m}{s} \rfloor = l-k-1$ $s$-cells in row~$\posi_{m-1}$ of $\lambda$ and $\lfloor \frac{l s + \rho_{m-1} - k s}{s} \rfloor = l-k$ $s$-cells in row~$\posi_{m-1}$ of $\Delta_s \lambda$.

However, if
$\posi_{m-1}=\posi_m-1$ and $\rho_{m-1} \ge \rho_m$, the number of $s$-cells does not change:
assume again that row~$\posi_m$ has $k s+\rho_m$ cells and row~$\posi_{m-1}$ has $l s + \rho_{m-1}$ cells for $k \le l$. Then we conclude that row~$\posi_{m-1}$ has $\lfloor \frac{l s + \rho_{m-1} - k s -\rho_m}{s} \rfloor = l-k+\lfloor \frac{\rho_{m-1} -\rho_m}{s} \rfloor$ $s$-cells before and $\lfloor \frac{l s + \rho_{m-1} - k s}{s} \rfloor =l-k+ \lfloor \frac{\rho_{m-1}}{s} \rfloor$ $s$-cells after removing $\rho_m$ in row~$\posi_m$. These expressions are equivalent provided that $0 < \rho_m \le \rho_{m-1} < s$.
\end{proof}

\subsection{Bijective proof for the case of strictly increasing
remainder sequences}
\label{sec:strict}
After having understood empty remainder sequences, the next easiest task is to accommodate strictly increasing remainder sequences. 
{The reason is that, in this case, the statistic $c_s$ increases by $1$ when successively removing the final non-zero remainders, i.e., 
$c_s(\Delta_s^i \lambda) = c_s(\Delta_s^{i-1} \lambda)+1$ for $i=1,\ldots,m$, except for the case when there is just one non-zero remainder left 
and it is the remainder of the first part of the partition.}

Let $\pos_s(\lambda)=(\posi_1,\dots,\posi_m)$ be the row position sequence of~$\lambda$.  The \Dfn{column position sequence}
$\pos'_s(\lambda)=(\posi'_1,\dots,\posi'_m)$ is the sequence $\left( \lceil \lambda_{\posi_1}/s \rceil, \dots,  \lceil \lambda_{\posi_m}/s \rceil \right)$.  Informally, these are the column indices corresponding to the removed remainders in~$\lab{s}$.
To give an example, let $s=4$ and let $\lambda$ be the partition $(4s+1, 4s, 3s+2, 3s, 2s, s+3)$.  Its Ferrers diagram is shown in \cref{fig:5} on the left, while its $s$-reduction is shown on the right (the bullets should be ignored at this point). In this example, we have
$\pos_s(\lambda)=(1,3,6)$ and $\pos'_s(\lambda)=(5,4,2)$, and the
remainder sequence is $\rem_s(\lambda)=(1,2,3)$ (corresponding to the
green cells in \cref{fig:5}).
\begin{figure}[ht]
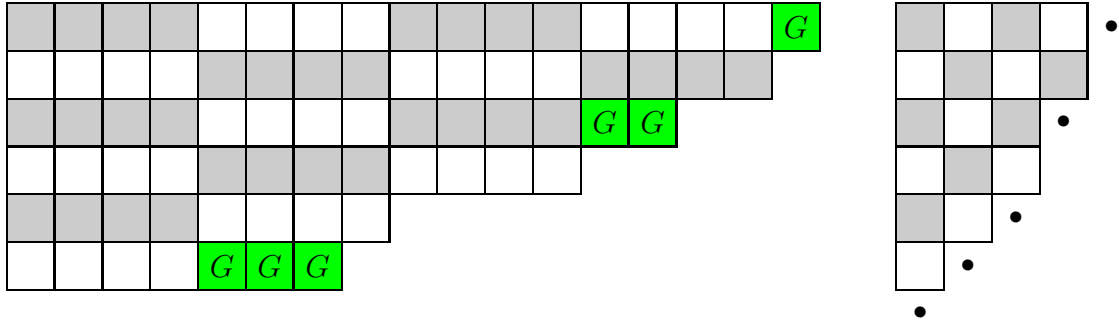

$$
\begin{ytableau}
\ec & \ec & \ec & \ec & \oc & \oc & \oc & \oc & \ec & \ec & \ec & \ec & \oc & \oc & \oc & \oc & \gc \\
\oc & \oc & \oc & \oc & \ec & \ec & \ec & \ec & \oc & \oc & \oc & \oc & \ec & \ec & \ec  & \ec \\
\ec & \ec & \ec & \ec & \oc & \oc & \oc & \oc & \ec & \ec & \ec & \ec & \gc & \gc \\
\oc & \oc & \oc & \oc & \ec & \ec & \ec & \ec & \oc & \oc & \oc & \oc \\
\ec & \ec & \ec & \ec & \oc & \oc & \oc & \oc  \\
\oc & \oc & \oc & \oc & \gc & \gc & \gc \\
\end{ytableau}
\hskip1cm
\begin{ytableau}
\ec & \oc & \ec & \oc & \none[\bullet] \\
\oc & \ec & \oc & \ec \\
\ec & \oc & \ec & \none[\bullet] \\
\oc & \ec & \oc \\
\ec & \oc & \none[\bullet] \\
\oc & \none[\bullet] \\
\none[\bullet] \\
\end{ytableau}
$$
\caption{A partition of 74 and its 4-reduction}
\label{fig:5}
\end{figure}

Given a partition~$\lambda$ with strictly increasing remainder sequence, the \Dfn{green cells} $\gout_s(\lambda)$ are defined as the cells $(\posi_1, \posi'_1),\dots,(\posi_m, \posi'_m)$.
In our running example of \cref{fig:5}, these are $(1,5)$, $(3,4)$, and $(6,2)$.

Recall that an \Dfn{outer corner} of a Ferrers diagram~$\lambda$ is a cell $z$ not contained in the diagram such that
the union $\lambda \cup z$ is a Ferrers diagram. For example, the outer
corners of the Ferrers diagram on the right of \cref{fig:5} are indicated
by black dots. Next we show that all green cells are outer corners of the $s$-reduction.
\begin{proposition}\label{outer}
  Let $\lambda$ be a partition with strictly increasing remainder sequence modulo~$s$.  Then the cells in $\gout_s(\lambda)$ are outer corners of~$\lab{s}$.
\end{proposition}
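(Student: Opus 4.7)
The plan is to verify both defining conditions of an outer corner for each cell $(\gamma_j, \gamma'_j) \in \gout_s(\lambda)$: first, that $(\gamma_j, \gamma'_j) \notin \lab{s}$, and second, that $\lab{s} \cup \{(\gamma_j, \gamma'_j)\}$ is still a Ferrers diagram. The single observation driving both checks is that because $\lambda_{\gamma_j}$ leaves the non-zero remainder $\rho_j$ modulo $s$, one has $\lceil \lambda_{\gamma_j}/s \rceil = \lfloor \lambda_{\gamma_j}/s \rfloor + 1$, which is precisely $\gamma'_j$.

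This identity immediately yields that row $\gamma_j$ of $\lab{s}$ has exactly $\gamma'_j - 1$ cells, so $(\gamma_j, \gamma'_j)$ is absent from $\lab{s}$ and, when $\gamma'_j > 1$, its left neighbour $(\gamma_j, \gamma'_j - 1)$ is present. The only remaining obligation, when $\gamma_j > 1$, is to show that row $\gamma_j - 1$ of $\lab{s}$ contains at least $\gamma'_j$ cells, i.e., that $\lfloor \lambda_{\gamma_j - 1}/s \rfloor \ge \gamma'_j$. I would split this into two cases. If $\gamma_{j-1} < \gamma_j - 1$ (with the convention $\gamma_0 = 0$), then $s$ divides $\lambda_{\gamma_j - 1}$, and the weak decrease $\lambda_{\gamma_j - 1} \ge \lambda_{\gamma_j}$ combined with the fact that $\lambda_{\gamma_j}/s$ is not an integer forces $\lambda_{\gamma_j - 1}/s \ge \gamma'_j$.

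The genuinely delicate case, which is the only place the strict monotonicity of the remainder sequence enters, is $\gamma_{j-1} = \gamma_j - 1$. Here I would write $\lambda_{\gamma_j - 1} = \ell s + \rho_{j-1}$ and $\lambda_{\gamma_j} = (\gamma'_j - 1)s + \rho_j$, so that $\lambda_{\gamma_j - 1} \ge \lambda_{\gamma_j}$ rearranges to $(\ell - \gamma'_j + 1)s \ge \rho_j - \rho_{j-1}$. Because $\rho_j > \rho_{j-1}$ by hypothesis, the right-hand side is strictly positive, which forces $\ell \ge \gamma'_j$ as required. This small integer manipulation is the crux; without strict monotonicity one could have $\ell = \gamma'_j - 1$, which would instead place $(\gamma_j, \gamma'_j)$ outside the diagram in a way that prevents it from being a corner. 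The rest of the proof is bookkeeping.
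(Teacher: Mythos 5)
Your proof is correct and takes essentially the same approach as the paper: both hinge on the identity $\lceil\lambda_i/s\rceil=\lfloor\lambda_i/s\rfloor+1$ for parts not divisible by~$s$, which places each green cell immediately past the end of its row of~$\lab{s}$, together with the strict increase of the remainders to handle a row directly above that also carries a remainder. The paper compresses your case analysis into the remark that the green cells have distinct column indices, while your explicit verification that $\lfloor\lambda_{\gamma_j-1}/s\rfloor\ge\gamma'_j$ merely spells out the same point in more detail.
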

\begin{proof}
    We have $\lceil\lambda_i/s\rceil = \lfloor\lambda_i/s\rfloor + 1$ if and only if $\lambda_i$ is not divisible by $s$, so the cells in $\gout_s(\lambda)$ are indeed just outside of~$\lab{s}$. Since the remainder sequence is strictly increasing, the cells in $\gout_s(\lambda)$ have distinct column indices.
\end{proof}
The \Dfn{remainder diagram} $\nu^+_s(\lambda)$ is obtained from the Ferrers diagram of~$\lab{s}$ by adding the green cells,
as coloured cells.\footnote{The concept of the ``remainder diagram"
has some similarities with parts of the
Littlewood-like decomposition of partitions in~\cite[p.~12]{WW20},
although there does not seem to be a direct overlap.} 
We call $\lab{s}$ the \Dfn{interior} of $\nu^+_s(\lambda)$.
\cref{fig:6} displays the remainder diagram $\nu^+_s(\lambda)$ of the
partition~$\lambda$ from \cref{fig:5}. There, the green cells are
marked in green, while the remaining --- non-coloured --- cells
form the interior of $\nu^+_s(\lambda)$.
\begin{figure}[ht]
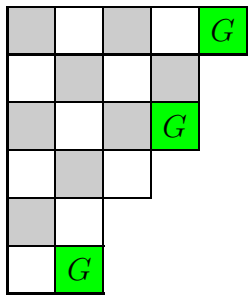

$$
\begin{ytableau}
\ec & \oc & \ec & \oc & \gc \\
\oc & \ec & \oc & \ec \\
\ec & \oc & \ec & \gc \\
\oc & \ec & \oc \\
\ec & \oc \\
\oc & \gc \\
\end{ytableau}
$$
\caption{The remainder diagram of the partition of \cref{fig:5}}
\label{fig:6}
\end{figure}

Next we show that
the statistics $r_s$ and $c_s$ are determined by the remainder diagram.
\begin{lemma}\label{lem:increasing}
  Let $\lambda$ be a partition with strictly increasing remainder sequence modulo~$s$ and remainder diagram $\nu^+_s(\lambda)$.
  Then
  \begin{align*}
    r_s(\lambda) &= \# \text{ of rows of } \nu^+_s(\lambda)-|\gout_s(\lambda)|, \\
    c_s(\lambda) &= \# \text{ of columns of } \nu^+_s(\lambda)-|\gout_s(\lambda)|.
  \end{align*}
\end{lemma}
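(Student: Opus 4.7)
The plan is to verify the two equations separately, using the description of $\nu^+_s(\lambda)$ as the $s$-reduction $\lab{s}$ with one green outer corner added for each of the $m=|\gout_s(\lambda)|$ nonzero remainders.

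For the row identity, I would observe that every row~$i$ of~$\lambda$ contributes a \emph{nonempty} row to $\nu^+_s(\lambda)$. Indeed, either $\lambda_i$ is divisible by $s$, in which case row $i$ of $\lab{s}$ already has $\lambda_i/s\ge 1$ cells, or $\lambda_i$ is not divisible by~$s$, in which case a green cell is added in row~$i$. Conversely no row beyond row $\ell(\lambda)$ receives any cell. Hence the number of rows of $\nu^+_s(\lambda)$ equals $\ell(\lambda)$, and since by definition $r_s(\lambda)=\ell(\lambda)-m$, the formula for $r_s$ follows.

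For the column identity, the first step is to identify the number of columns of $\nu^+_s(\lambda)$ with $\lceil \lambda_1/s\rceil$: the first row of $\nu^+_s(\lambda)$ has length $\lfloor\lambda_1/s\rfloor$ augmented by one if and only if $\gamma_1=1$, which gives exactly $\lceil\lambda_1/s\rceil$ in either case, and this is the largest row length because the row lengths $\lceil\lambda_i/s\rceil$ of $\nu^+_s(\lambda)$ are weakly decreasing in~$i$. It therefore suffices to show $c_s(\lambda)=\lceil\lambda_1/s\rceil-m$.

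I would prove this last claim by induction on~$m$ using \cref{delete}. The key point is that strict increase of the remainder sequence means $\rho_{m-1}<\rho_m$, so the ``otherwise'' branch of \cref{delete} applies as long as $m>1$, giving $c_s(\Delta_s\lambda)=c_s(\lambda)+1$; moreover $\Delta_s\lambda$ still has a strictly increasing remainder sequence of length $m-1$, so the induction can be iterated. At the very last step, when we have $m=1$, \cref{delete} gives $c_s(\Delta_s\lambda)=c_s(\lambda)$ if $\gamma_1=1$ and $c_s(\Delta_s\lambda)=c_s(\lambda)+1$ otherwise. Combining these, $c_s(\Delta_s^m\lambda)=c_s(\lambda)+m-[\gamma_1=1]$. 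Since $\Delta_s^m\lambda$ has empty remainder sequence and equals $\lab{s}\!\uparrow_s$, the analysis of \cref{sec:empty} yields $c_s(\Delta_s^m\lambda)=\lfloor\lambda_1/s\rfloor$. Solving and using $\lceil\lambda_1/s\rceil=\lfloor\lambda_1/s\rfloor+[\gamma_1=1]$ gives $c_s(\lambda)=\lceil\lambda_1/s\rceil-m$, as required.

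The main obstacle is bookkeeping the single exceptional case $\gamma_1=1$, which is the only situation where $\Delta_s$ fails to strictly decrease $c_s$; the identity $\lceil\lambda_1/s\rceil-\lfloor\lambda_1/s\rfloor=[\gamma_1=1]$ absorbs this exactly and makes the formula uniform.
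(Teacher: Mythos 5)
Your proposal is correct and follows essentially the same route as the paper's proof: both reduce to the empty-remainder-sequence case of \cref{sec:empty} by successively removing the final remainders and invoking \cref{delete}, with the single exceptional case $m=1$, $\posi_1=1$ accounted for by the extra column $\lceil\lambda_1/s\rceil-\lfloor\lambda_1/s\rfloor$. Your write-up is merely more explicit in the bookkeeping (the indicator of $\posi_1=1$ and the identification of the column count with $\lceil\lambda_1/s\rceil$) than the paper's terse argument.
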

\begin{proof}
    The first equation holds because the green cells correspond to the parts of~$\lambda$ which are not divisible by $s$.

    For a partition~$\lambda$ with empty remainder sequence, we have $|\gout_s(\lambda)|=0$ and $\nu^+_s(\lambda) = \lab{s}$, and $c_s(\lambda)$ equals the number of columns of~$\lab{s}$. If $m=1$ and $\posi_1=1$, then $c_s(\lambda)$ also equals the number of columns of~$\lab{s}$.  However, $\nu^+_s(\lambda)$ has precisely one more column than~$\lab{s}$.  Otherwise, by \Cref{delete}, each green cell of $\nu^+_s(\lambda)$ reduces the number of cells counted by $c_s(\lambda)$ by one.
\end{proof}

The conjugate of a remainder diagram is obtained in the same way as the conjugate of a Ferrers diagram, by reflecting about the main diagonal.  Thus, the green cells are at positions $(\posi'_1, \posi_1),\dots,(\posi'_m, \posi_m)$ of the conjugate remainder diagram.

Conjugating $\nu_s^+(\lambda)$, then expanding the cells of the interior again into row segments of $s$ cells and putting the remainders back into the green cells, in increasing order from top to bottom, we obtain the involution that swaps the two statistics in this special case.

To write down the bijection formally we need one further definition.  Let $\nu^+$ be a partition with $m$ coloured cells that are at the end of their respective rows, and let $\rem=(\rho_1,\dots,\rho_m)$ be a vector of integers between 1 and~$s-1$.  Then we define $\nu^+ \leftarrow_s \rem$ to be obtained from the $s$-blow-up of the interior of $\nu^+$
(that is, of the uncoloured cells) by adding $\rho_i$ cells to the rows corresponding to the coloured cells, in order.  

\begin{construction}[\sc Strictly increasing remainder sequence]
\label{strict}
Let $\lambda$ be a partition with strictly increasing remainder sequence $\rem=\rem_s(\lambda)$. We define the mapping
$$\lambda \mapsto \left(\left[\nu^+_s(\lambda)\right]' \leftarrow_s \rem\right).$$
\end{construction}
Our reasoning above demonstrates that \cref{strict} is an involution on
partitions with strictly increasing remainder sequence modulo~$s$
that interchanges $r_s$ and $c_s$.

\medskip
We now extend \Cref{zero} to the case of strictly increasing remainder sequences.
\begin{lemma}
\label{inc}
Let $\rem=(\rho_1,\dots, \rho_m)$ be a vector of integers between $1$ and~$s-1$ with strictly increasing coordinates. The generating function of partitions~$\lambda$
with $\rem_s(\lambda)=\rem$ with respect to the weight
$R^{r_s(\lambda)} C^{c_s(\lambda)} q^{|\lambda|}$ is given by
$$
q^{|\rem|} \sum_{1 \le \posi_1 < \posi_2 < \dots < \posi_m} Q^{|\pos|-m} \left( R^{\posi_m-m} + \sum_{k \ge 1}  \frac{C Q^{k}}{(C Q; Q)_k}
R^{\max(\posi_m-m,k-m)}
\right),
$$
where, as before, $Q=q^s$.
\end{lemma}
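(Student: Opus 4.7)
The plan is to fix the row position sequence $\pos = \pos_s(\lambda) = (\posi_1, \dots, \posi_m)$; then $\rem$, $\pos$, and the $s$-reduction $\mu = \lab{s}$ together determine $\lambda$. Since $|\lambda| = |\rem| + s|\mu|$ and $Q = q^s$, the prefactor $q^{|\rem|}$ is immediate, and it remains to compute $\sum_\mu R^{r_s(\lambda)} C^{c_s(\lambda)} Q^{|\mu|}$ for each fixed $\pos$ and then sum over $\pos$.

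I would first characterize the admissible $\mu$. The weakly decreasing condition on $\lambda$, combined with the strictly increasing remainders at positions $\posi_j$ (and zeros elsewhere), is equivalent to $\mu$ being weakly decreasing with a strict drop $\mu_{\posi_j - 1} > \mu_{\posi_j}$ at each $\posi_j > 1$. Passing to the conjugate, this is equivalent to $\mu'$ containing each $\posi_j - 1$ (for $\posi_j > 1$) as a part. These $m' := m - [\posi_1 = 1]$ required parts are distinct positive integers, so admissible $\mu$ correspond bijectively to arbitrary partitions $\nu$ via $\mu' = (\text{required parts}) \sqcup \nu$ (multiset union). A short computation then gives $|\mu'| = |\nu| + (|\pos| - m)$, $\ell(\mu') = m' + \ell(\nu)$, and $\mu'_1 = \max(\posi_m - 1, \nu_1)$. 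Combining these with $\mu_1 = \ell(\mu')$, $\ell(\mu) = \mu'_1$, the identity $\ell(\lambda) = \max(\posi_m, \ell(\mu))$, and \cref{lem:increasing} (which, after checking that the number of columns of $\nu^+_s(\lambda)$ is $\mu_1 + [\posi_1 = 1]$, yields $c_s(\lambda) = \mu_1 - m + [\posi_1 = 1]$), the statistics simplify to
\[
c_s(\lambda) = \ell(\nu) \qquad\text{and}\qquad r_s(\lambda) = \max(\posi_m, \nu_1) - m,
\]
with the Iverson-bracket contributions cancelling.

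Finally the generating function is evaluated directly: the empty $\nu$ contributes $R^{\posi_m - m}$, while partitions with $\nu_1 = k \geq 1$ contribute $R^{\max(\posi_m - m,\, k - m)} \frac{CQ^k}{(CQ;Q)_k}$ via the standard identity $\sum_{\nu:\, \nu_1 = k} C^{\ell(\nu)} Q^{|\nu|} = \frac{CQ^k}{(CQ;Q)_k}$. Multiplying by the prefactor $Q^{|\pos| - m}$ from the bijection, summing over $1 \le \posi_1 < \cdots < \posi_m$, and restoring $q^{|\rem|}$ gives the claimed expression. The main technical point is the case distinction $\posi_1 = 1$ versus $\posi_1 > 1$ when characterizing $\mu'$ and evaluating $c_s$: in each, the number of required parts and the column count of $\nu^+_s(\lambda)$ shift by $[\posi_1 = 1]$, yet these two shifts cancel cleanly in the final formulas for $c_s$ and $r_s$.
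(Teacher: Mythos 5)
Your proof is correct, and at its core it uses the same decomposition as the paper, just packaged differently: removing the required part $\posi_j-1$ from the conjugate of the $s$-reduction is exactly the paper's operation of deleting $s$ cells from each row strictly above row $\posi_j$ (after stripping the remainder $\rho_j$), so your free partition $\nu$ is the conjugate of the $s$-reduction of the residual partition that the paper is left with. The difference is in the bookkeeping. The paper proceeds iteratively, invoking \cref{delete} at each step to see that $c_s$ is unchanged, then quoting \cref{zero} for the residual empty-remainder partition, with a short case analysis ($\posi_m$ versus the residual length $k$) to pin down the power of $R$. You instead argue in one shot: the strict-drop characterization of admissible $s$-reductions (which is where strict increase of $\rem$ enters), the identification $\mu'=\{\posi_j-1:\posi_j>1\}\sqcup\nu$, and the closed formulas $c_s(\lambda)=\ell(\nu)$ (via \cref{lem:increasing}, with the two $[\posi_1=1]$ shifts cancelling, as you note --- and \cref{lem:increasing} precedes this lemma, so there is no circularity) and $r_s(\lambda)=\max(\posi_m,\nu_1)-m$, after which the sum over $\nu$ is immediate from $\sum_{\nu:\nu_1=k}C^{\ell(\nu)}Q^{|\nu|}=CQ^k/(CQ;Q)_k$. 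Your version is arguably cleaner for this particular lemma, since both statistics are computed explicitly rather than tracked through successive deletions; the paper's iterative formulation, however, is the one that extends verbatim to arbitrary remainder sequences in \cref{gen}, where weak descents turn your drop conditions into weak ones and the case analysis of \cref{delete}, encoded in the indicators $d_j$, becomes essential.
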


\begin{proof}
Let $\lambda$ be a partition with $\rem_s(\lambda)=\rem$ and $\pos_s(\lambda)=\pos$.
We modify $\lambda$ as follows: we delete the last $\rho_m$ cells in row $\posi_m$, i.e., we apply
$\Delta_s$ to~$\lambda$, and then delete $s$ cells in each row
strictly above row $\posi_m$. By 
\cref{delete}, this does not change the statistic $c_s$. These deletions are taken into account by the
terms $q^{\rho_m}$ and $Q^{\posi_m-1}$ in the generating function. We continue in this manner: we delete the last cells $\rho_{j}$ in row $\posi_{j}$ and delete $s$ cells in each row strictly above row $\posi_{j}$ for $j=m-1,m-2,\dots,1$. This does not change the statistic $c_s$ and, in total, the deletions are taken into account by the terms $q^{|\rem|}$ and $Q^{|\pos|-m}$.

We are left with a partition with empty remainder sequence. Suppose
$k$ is the length of this partition.
If the resulting partition is empty, corresponding to the case $k=0$, then the original partition~$\lambda$ has no rows below row~$\posi_m$. This implies that $\lambda$ has exactly $\posi_m$ parts, of which $\posi_m-m$ are divisible by $s$. This explains the term  $R^{\posi_m-m}$.
In the case $k \ge 1$, as can be seen in the proof of \Cref{zero}, the generating function of such partitions with respect to the weight $C^{c_s(\lambda)} q^{|\lambda|}$
is $\frac{C Q^{k}}{(C Q;Q)_k}$.
If $\posi_m > k$, then the original partition~$\lambda$ has $\posi_m$ parts and $\posi_m-m$ of them are divisible by $s$; compare with the case $k=0$. If $\posi_m \le k$, then it follows that $\lambda$ has $k$ parts. There are $\posi_m-m$ parts divisible by $s$ before position~$\posi_m$ and $k-\posi_m$ parts divisible by $s$ after position~$\posi_m$, which amount to a total of $k-m$ parts divisible by $s$. The assertion follows.
\end{proof}

\section{The general case}\label{sec:general}
\label{sec:4}

In this section we provide an algorithm, presented in
\cref{reduce}, that affords a
reduction of the general case to the case of strictly increasing
remainder sequences, the case that we had just discussed in
\cref{sec:strict}. This leads in particular to the completion
of the proof of \cref{main}, with the involution summarized in
\cref{general}. As already in the previous section, also here
we derive in parallel the corresponding generating function
results, culminating in \cref{theogen}, which constitutes the
basis for the proof of \cref{coefficient} in \cref{sec:5}.

Since it provides the inspiration for the
constructions to follow, we start from the generating
function side.
We show next how the observation from \cref{crucial} can be
used to generalize \Cref{inc} in a straightforward manner to the general case. In order to express the generating function, it is useful to define a $01$-sequence
${\mathbf d}(\rem,\pos)=(d_1,\dots,d_m)$ of length~$m$, which depends
on a vector $\rem$ of length~$m$ of integers between 1 and~$s-1$ and a strictly increasing sequence of positive integers
$\pos=(\posi_1,\dots,\posi_m)$ as follows; later on, $\posi_j$ will again be the row of the remainder $\rho_j$:  we set $d_j=1$ unless $j>1$, $\rho_{j-1} \ge \rho_j$ and $\posi_j=\posi_{j-1}+1$, in which case we set $d_j=0$. The motivation for this definition comes from the operation provided in \cref{crucial}. Note that $d_1=1$.

\begin{lemma}
\label{gen}
Let $\rem$ be a vector of integers between $1$ and~$s-1$ 
of length~$m$. The generating function
with respect to the
weight $R^{r_s(\lambda)} C^{c_s(\lambda)} q^{|\lambda|}$ of
partitions~$\lambda$ with remainder sequence~$\rem$ is given by
$$
q^{|\rem|} \sum_{1 \le \posi_1 < \posi_2 < \dots < \posi_m} Q^{{\mathbf d}(\rem,\pos) \cdot ({\pos - \mathbf 1})} \left( R^{\posi_m-m} +
\sum_{k \ge 1}  \frac{C Q^{k}}{(C Q; Q)_k}
 R^{\max(\posi_m-m,k-m)} \right),
$$
where $Q=q^s$ and\/
${\mathbf d}(\rem,\pos) \cdot ({\pos- \mathbf 1})$ denotes the standard inner
product of ${\mathbf d}(\rem,\pos)$ and
$(\pos- \mathbf 1) = (\posi_1-1,\dots,\posi_m-1)$.
\end{lemma}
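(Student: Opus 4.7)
The plan is to extend the iterative reduction of Lemma~\ref{inc} from the strictly increasing case to an arbitrary remainder sequence. In the strictly increasing case each application of $\Delta_s$ increases $c_s$ by one, and this is exactly compensated by the removal of $s$ cells in every row above. In general, Lemma~\ref{delete} shows that $\Delta_s$ may also preserve $c_s$ all on its own, precisely in the situation encoded by $d_j = 0$, and then the compensating removal must be skipped; this is what turns the ``constant vector of ones'' implicit in the proof of Lemma~\ref{inc} (contributing $|\pos|-m$) into the vector $\mathbf d(\rem,\pos)$ (contributing $\mathbf d(\rem,\pos)\cdot(\pos-\mathbf 1)$).

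Fix $\pos = (\posi_1, \ldots, \posi_m)$ with $1 \le \posi_1 < \cdots < \posi_m$ and consider a partition $\lambda$ with $\rem_s(\lambda) = \rem$ and $\pos_s(\lambda) = \pos$. Set $\lambda^{(m)} := \lambda$ and, for $j = m, m-1, \ldots, 1$, define $\lambda^{(j-1)}$ by applying $\Delta_s$ to $\lambda^{(j)}$ and, if $d_j = 1$, additionally deleting $s$ cells from each of the rows $1, 2, \ldots, \posi_j - 1$. Since deleting $s$ cells from a row preserves its residue modulo $s$, and $\Delta_s$ makes row $\posi_j$ divisible by $s$, each $\lambda^{(j-1)}$ satisfies $\rem_s(\lambda^{(j-1)}) = (\rho_1, \ldots, \rho_{j-1})$ and $\pos_s(\lambda^{(j-1)}) = (\posi_1, \ldots, \posi_{j-1})$. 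The key claim is that every step preserves $c_s$. When $d_j = 0$, this is immediate from Lemma~\ref{delete}, since the conditions defining $d_j = 0$ match the ``$c_s$-preserving'' case of that lemma. When $d_j = 1$, Lemma~\ref{delete} gives $c_s(\Delta_s \lambda^{(j)}) = c_s(\lambda^{(j)}) + 1$ (except in the vacuous subcase $j = 1$, $\posi_1 = 1$), and a direct row-by-row count of $s$-cells shows that the subsequent deletion of $s$ cells in each of the $\posi_j - 1$ rows above decreases $c_s$ by exactly one.

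With $c_s$-preservation in hand, the reduction is reversible once $\pos$ and $\rem$ are fixed, furnishing a bijection between partitions $\lambda$ with $\rem_s(\lambda)=\rem$, $\pos_s(\lambda)=\pos$ and partitions $\mu := \lambda^{(0)}$ with empty remainder sequence. The total number of cells deleted is $|\rem| + s \cdot \mathbf d(\rem, \pos) \cdot (\pos - \mathbf 1)$, contributing the weight factor $q^{|\rem|} Q^{\mathbf d(\rem, \pos) \cdot (\pos - \mathbf 1)}$. The $C^{c_s(\mu)} q^{|\mu|}$-weight of $\mu$ is $1$ when $\mu$ is empty and $C Q^k / (CQ;Q)_k$ when $\mu$ has length $k \ge 1$, by Lemma~\ref{zero}. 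Exactly as in the proof of Lemma~\ref{inc}, rows of $\lambda$ below $\posi_m$ are never touched by the reduction, so $\ell(\lambda) = \max(\posi_m, \ell(\mu))$ and $r_s(\lambda) = \ell(\lambda) - m = \max(\posi_m - m, \ell(\mu) - m)$; this gives $R^{\posi_m - m}$ when $\mu$ is empty and $R^{\max(\posi_m - m, k - m)}$ when $\mu$ has length $k \ge 1$. Summing over all admissible $\pos$ yields the claimed generating function.

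The principal point requiring care is the verification, in the $d_j = 1$ case, both that the partition property of $\lambda^{(j-1)}$ survives the compensating deletion and that this deletion decreases $c_s$ by exactly one. Both follow from the same case analysis on whether $\posi_{j-1} = \posi_j - 1$ (and, if so, on the comparison of $\rho_{j-1}$ to $\rho_j$) that underlies the proof of Lemma~\ref{delete}.
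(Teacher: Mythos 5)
Your proposal is correct and takes essentially the same route as the paper: it is the iterative remainder-removal from the proof of \cref{inc}, modified so that the $s$ cells per row above row~$\posi_j$ are deleted exactly when $d_j=1$, with the preservation of $c_s$ justified by \cref{delete} and the $R$- and $C$-bookkeeping carried out as in \cref{zero,inc}. If anything, you make explicit two points the paper leaves implicit (that the compensating deletion lowers $c_s$ by exactly one, and the reversibility of the reduction for fixed $\pos$ and $\rem$), so there is no gap relative to the paper's own argument.
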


\begin{proof}
The proof follows essentially the steps from the proof of \Cref{inc}, except for the following detail: when we delete the last $\rho_j$ cells in row $\posi_j$ then we delete $s$
cells in each row strictly above row $\posi_j$ if and only if
$d_j=1$. If $d_j=0$, we do not delete cells above row $\posi_j$. This
is because the observation in \cref{delete} on removing non-zero remainders says that the statistic $c_s$ does not change when deleting the last $\rho_j$ cells in row $\posi_j$ if and only if $d_j=0$.
\end{proof}

It turns out that the generating function in \Cref{gen} can be simplified.

\begin{theorem}
\label{theogen}
Let $\rem$ be a vector of integers between $1$ and~$s-1$ of length~$m$. The generating function
with respect to the
weight $R^{r_s(\lambda)} C^{c_s(\lambda)} q^{|\lambda|}$
of partitions~$\lambda$ with remainder sequence~$\rem$ is
$$
q^{|\rem|} Q^{-\wmaj(\rem)}  \sum_{i \ge m} Q^{\binom{m}{2}+i-m}
 \begin{bmatrix} i-1 \\ m-1 \end{bmatrix}_Q
\left( R^{i-m} +  \sum_{k \ge 1}  \frac{C Q^{k}}{(C Q; Q)_k}
R^{\max(i-m,k-m)} \right)
$$
where $Q=q^s$.
\end{theorem}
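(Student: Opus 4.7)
The parenthetical factor in \cref{gen} depends on $\pos$ only through $\posi_m$, so the plan is to group the terms of \cref{gen} according to $i := \posi_m$ and match the result with \cref{theogen}. This reduces the theorem to the single identity
\[
\sum_{1 \le \posi_1 < \cdots < \posi_{m-1} < i} Q^{\mathbf{d}(\rem, \pos) \cdot (\pos - \mathbf 1)}
= Q^{\binom{m}{2} + i - m - \wmaj(\rem)} \begin{bmatrix} i - 1 \\ m - 1 \end{bmatrix}_Q,
\]
valid for every $\rem$ of length $m$ and every $i \ge m$, where $\pos = (\posi_1, \dots, \posi_{m-1}, i)$. I would prove this identity by induction on $m$. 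The case $m = 1$ is immediate: the only admissible $\pos$ is $(i)$, with $d_1 = 1$, so both sides equal $Q^{i-1}$.

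For the inductive step, set $\rem' = (\rho_1, \dots, \rho_{m-1})$ and $\pos' = (\posi_1, \dots, \posi_{m-1})$. Since $d_j$ depends only on $\rho_{j-1}, \rho_j, \posi_{j-1}, \posi_j$, one has
\[
\mathbf{d}(\rem, \pos) \cdot (\pos - \mathbf 1) = \mathbf{d}(\rem', \pos') \cdot (\pos' - \mathbf 1) + d_m \cdot (i - 1),
\]
where $d_m = 0$ precisely if $\rho_{m-1} \ge \rho_m$ and $\posi_{m-1} = i - 1$. Splitting the outer sum by $j := \posi_{m-1} \in \{m - 1, \dots, i - 1\}$ and applying the inductive hypothesis to the inner sum over $\pos'$ with $\posi_{m-1} = j$, one reduces the claim to a pair of $q$-binomial identities depending on whether $\rho_{m-1} < \rho_m$ or $\rho_{m-1} \ge \rho_m$.

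In the first case, $d_m \equiv 1$ and $\wmaj(\rem) = \wmaj(\rem')$, and the identity to be checked is the $q$-analogue of the hockey-stick identity
\[
\sum_{j = m - 1}^{i - 1} Q^{j} \begin{bmatrix} j - 1 \\ m - 2 \end{bmatrix}_Q = Q^{m - 1} \begin{bmatrix} i - 1 \\ m - 1 \end{bmatrix}_Q,
\]
which follows by iterating $q$-Pascal. In the second case, $\wmaj(\rem) = \wmaj(\rem') + (m - 1)$ and the boundary term $j = i - 1$ carries weight $Q^0$ rather than $Q^{i-1}$; after applying the previous identity and isolating this term, the claim reduces to
\[
(1 - Q^{m - 1}) \begin{bmatrix} i - 1 \\ m - 1 \end{bmatrix}_Q = (1 - Q^{i - 1}) \begin{bmatrix} i - 2 \\ m - 2 \end{bmatrix}_Q,
\]
which is immediate from the product formula for $q$-binomial coefficients. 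The main obstacle is purely administrative: keeping the shift of $\wmaj$ between $\rem$ and $\rem'$ consistent with the various exponents of $Q$ pulled out of the sum over $j$, and correctly isolating the single boundary term in the second case.
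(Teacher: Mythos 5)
Your proposal is correct and follows essentially the same route as the paper: the paper likewise fixes $\posi_m$, proves by induction on $m$ --- with the same case split on $\rho_{m-1}\ge\rho_m$ and on whether $\posi_{m-1}=\posi_m-1$ --- that the $\mathbf d$-weighted sum equals $Q^{-\wmaj(\rem)}$ times the plain sum (\cref{maj}), and combines this with the evaluation \eqref{qbin}. The only difference is minor: you fold \eqref{qbin} into the induction and close each case with $q$-binomial identities ($q$-hockey-stick and a product-formula/$q$-Pascal step), whereas the paper keeps the binomial evaluation separate and finishes the inductive step by combinatorial index shifts, a formulation it later reuses to build \cref{reduce}.
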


The theorem follows from \cref{gen},
the observation that,
for fixed $\posi_m$, we have
\begin{equation}
\label{qbin}
\sum_{1 \le \posi_1 < \posi_2 < \dots < \posi_m} Q^{|\pos|-m}
= Q^{\binom{m}{2}+\posi_m-m}  \begin{bmatrix} \posi_m-1 \\ m-1 \end{bmatrix}_Q,
\end{equation}
and from \cref{maj} below.
\Cref{qbin} holds
since $\left[\begin{smallmatrix} n+m \\ m\end{smallmatrix} \right]_q$
is the generating function $\sum_\lambda q^{|\lambda|}$ of
partitions~$\lambda$ of length at most~$m$ and parts no greater than
$n$, and since
$$
\sum_{1 \le \posi_1 < \posi_2 < \dots < \posi_m} Q^{|\pos|-m} =
Q^{\posi_m-m+1+2+\dots+m-1} \sum_{0 \le \posi^-_1 \le \posi^-_2 \le  \dots \le  \posi^-_{m-1} \le \posi_m-m} Q^{\posi^-_1+\dots+\posi^-_{m-1}},
$$
by the transformation $\posi^-_k=\posi_k-k$.

\begin{lemma}
\label{maj}
Let $\rem$ be a vector of integers between $1$ and~$s-1$ of length~$m$. Then, for fixed $\posi_m$, we have
$$
\sum_{1 \le \posi_1 < \posi_2 < \dots < \posi_m} Q^{{\mathbf d}(\rem,\pos) \cdot ({\pos- \mathbf 1})} =
Q^{-\wmaj(\rem)} \sum_{1 \le \posi_1 < \posi_2 < \dots < \posi_m} Q^{|\pos|-m},
$$
where $\pos=(\posi_1,\dots,\posi_m)$.
\end{lemma}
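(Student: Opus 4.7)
The plan is to prove \cref{maj} by induction on the length $m$ of $\rem$. The base case $m=1$ is immediate: $\rem$ has no weak descents, so $\wmaj(\rem)=0$ and $\mathbf{d}(\rem,\pos)=(1)$, making both sides equal to $Q^{\posi_1-1}$.

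For the inductive step, set $\rem':=(\rho_1,\dots,\rho_{m-1})$ and $\pos':=(\posi_1,\dots,\posi_{m-1})$. Observe that $d_j(\rem,\pos)=d_j(\rem',\pos')$ for $j\le m-1$, and that $\wmaj(\rem)=\wmaj(\rem')+(m-1)$ when $\rho_{m-1}\ge\rho_m$, while $\wmaj(\rem)=\wmaj(\rem')$ otherwise. I would condition on $\posi_{m-1}$ and apply the inductive hypothesis slice by slice, reducing to two cases. If $\rho_{m-1}<\rho_m$, then $d_m(\rem,\pos)=1$ for every $\pos$ and $\wmaj(\rem)=\wmaj(\rem')$, so factoring out $Q^{\posi_m-1}$ yields the claim at once.

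The more delicate case is $\rho_{m-1}\ge\rho_m$: here $d_m(\rem,\pos)=0$ exactly when $\posi_{m-1}=\posi_m-1$. Let $F(k)$ denote the $Q$-weighted sum of $Q^{|\pos'|-(m-1)}$ over strictly increasing sequences $\posi_1<\cdots<\posi_{m-2}<\posi_{m-1}=k$. After applying the inductive hypothesis separately to the slices with $\posi_{m-1}<\posi_m-1$ and with $\posi_{m-1}=\posi_m-1$, the target identity reduces to
$$
Q^{\posi_m-1}\sum_{k=1}^{\posi_m-2}F(k) + F(\posi_m-1) = Q^{\posi_m-m}\sum_{k=1}^{\posi_m-1}F(k).
$$
The standard evaluations
$$
\sum_{k=1}^{M}F(k) = Q^{\binom{m-1}{2}}\begin{bmatrix}M \\ m-1\end{bmatrix}_Q, \qquad F(M) = Q^{M-1+\binom{m-2}{2}}\begin{bmatrix}M-1 \\ m-2\end{bmatrix}_Q,
$$
proved by the substitution $\alpha_j:=\posi_j-j$ as in \cref{qbin}, together with the arithmetic identity $\binom{m-1}{2}=\binom{m-2}{2}+(m-2)$, reduce this after setting $N:=\posi_m-1$ and factoring $Q^{N-1+\binom{m-2}{2}}$ out of both sides to the $q$-Pascal recurrence
$$
\begin{bmatrix}N \\ m-1\end{bmatrix}_Q = \begin{bmatrix}N-1 \\ m-2\end{bmatrix}_Q + Q^{m-1}\begin{bmatrix}N-1 \\ m-1\end{bmatrix}_Q,
$$
which completes the induction.

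The main obstacle I anticipate is precisely this second case: the exceptional term $F(\posi_m-1)$, produced by the single value $\posi_{m-1}=\posi_m-1$ at which $d_m=0$, does not merge with the rest of the sum via any geometric-type manipulation; however, it conspires with the remainder to match $\sum_{k=1}^{\posi_m-1}F(k)$ precisely through $q$-Pascal. Careful tracking of the exponents of $Q$ in $F(k)$ and in the partial sums $\sum F(k)$ before invoking $q$-Pascal is the technical core of the argument.
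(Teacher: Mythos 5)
Your argument is correct and follows essentially the same route as the paper's proof: induction on $m$, applying the induction hypothesis slice by slice with $\posi_{m-1}$ fixed, and splitting into the cases $\rho_{m-1}<\rho_m$ and $\rho_{m-1}\ge\rho_m$, isolating in the latter the exceptional slice $\posi_{m-1}=\posi_m-1$ where $d_m=0$. The one difference is how the descent case is finished: the paper merges the two resulting sums bijectively, via the index shifts $\posi_i\mapsto\posi_i-1$ and $\posi_i\mapsto\posi_{i+1}-1$, identifying them with the $\posi_1\ge 2$ and $\posi_1=1$ parts of the target sum, whereas you evaluate both pieces in closed form (your formulas for $\sum_{k\le M}F(k)$ and $F(M)$, obtained as in \cref{qbin}, are correct) and conclude with the $q$-Pascal recurrence $\begin{bmatrix}N\\ m-1\end{bmatrix}_Q=\begin{bmatrix}N-1\\ m-2\end{bmatrix}_Q+Q^{m-1}\begin{bmatrix}N-1\\ m-1\end{bmatrix}_Q$, which is indeed the standard recurrence and completes the step. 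The algebraic finish is perfectly adequate for the lemma itself; the paper's bijective finish has the additional payoff that it is later translated verbatim into \cref{reduce}, the reduction to remainder diagrams without yellow cells, which your closed-form computation would not directly provide.
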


\begin{proof}
We need the following generalization of the weak major index: for $k$ with $1 \le k < m$,
we define
$$
\wmaj_k(\rem)= \underset{j\le k}{\sum_{j:\rho_j \ge \rho_{j+1}}} j.
$$
This simply is the weak major index of the tuple $\rem$ cut off after
the {$(k+1)$-st entry.} Note that $\wmaj_{m-1}=\wmaj$ for sequences of
length~$m$. 

The proof is by induction on~$m$. For the start of the
induction we note that for $m=1$ the statement is obvious.

In the following arguments, the reader should always keep in mind
that, by assumption, $\posi_m$ is fixed throughout.
In particular, if we write a sum 
$\sum_{1 \le \posi_1 < \posi_2 < \dots < \posi_m} \dots$, then the
sum runs over the $\posi_i$ with $i<m$, while $\posi_m$ is fixed. 

Now, by the induction hypothesis, we may assume
\begin{multline*}
\sum_{1 \le \posi_1 < \posi_2 < \dots < \posi_m} Q^{{\mathbf d}(\rem, \pos) \cdot ({\pos- \mathbf 1})}
=\sum_{\posi_{m-1}<\posi_{m}} Q^{d_m(\gamma_m -1)} \sum_{1 \le \posi_1 < \posi_2 < \dots < \posi_{m-1}} Q^{\sum_{j=1}^{m-1} d_j(\posi_j-1)}\\ 
=Q^{-\wmaj_{m-2}(\rem)}
\sum_{1 \le \posi_1 < \posi_2 < \dots < \posi_m} Q^{\sum_{j=1}^{m-1} (\posi_j-1) + d_{m} (\posi_{m}-1)}.
\end{multline*}
If $\rho_{m-1} < \rho_m$, then $d_m=1$ and
$\wmaj(\rem)=\wmaj_{m-2}(\rem)$, and the assertion follows in this
case. If, on the other hand, we have $\rho_{m-1} \ge \rho_m$, then
$\wmaj(\rem)=\wmaj_{m-2}(\rem)+m-1$, and, by the definition of $d_m$, we
have
\begin{multline}
\label{left}
\sum_{1 \le \posi_1 < \posi_2 < \dots < \posi_{m-1} < \posi_m} Q^{\sum_{j=1}^{m-1} (\posi_j-1) + d_{m} (\posi_{m}-1)} \\
= \sum_{1 \le \posi_1 < \posi_2 < \dots < \posi_{m-1} < \posi_m-1} Q^{|\pos|-m}
+ \sum_{ 1 \le \posi_1 < \posi_2 < \dots < \posi_{m-2} < \posi_m-1} Q^{\sum_{j=1}^{m-2} (\posi_j-1)+\posi_{m}-2},
\end{multline}
where the first sum of the right-hand side corresponds to the case $\posi_m > \posi_{m-1}+1$ and the second sum corresponds to the case $\posi_m = \posi_{m-1}+1$.

We need to show that this is equal to
$$
Q^{-m+1} \sum_{1 \le \posi_1 < \posi_2 < \dots < \posi_m} Q^{|\pos|-m}.
$$
We provide a combinatorial proof.
First note that the first term in the second line of \eqref{left} can be transformed as follows:
$$
\sum_{1 \le \posi_1 < \posi_2 < \dots < \posi_{m-1} < \posi_m-1} Q^{|\pos|-m} =
Q^{-m+1} \sum_{2 \le \posi_1 < \posi_2 < \dots < \posi_{m-1} < \posi_m} Q^{|\pos|-m}.
$$
Here we have used the transformation $\posi_i \to \posi_i-1$ for $i \in \{1,2,\dots,m-1\}$. The second term
in the second line of \eqref{left} is
$$
\sum_{1 \le \posi_1 < \posi_2 < \dots < \posi_{m-2} < \posi_m-1} Q^{\sum_{j=1}^{m-2} (\posi_j-1)+\posi_{m}-2} =
Q^{-m+1} \sum_{1=\posi_1 < \posi_2 < \dots < \posi_{m-1} < \posi_m} Q^{|\pos|-m},
$$
where we have used the transformation $\posi_i \to \posi_{i+1}-1$ for $i \in \{1,2,\dots,m-2\}$ and have set $\posi_1=1$.
This completes the proof.
\end{proof}

We will now use the combinatorial proof of the previous lemma to provide the missing piece of our bijection.
More concretely, the combinatorial argument allows us to reduce everything to the essence of \Cref{strict}.

We extend the notion of the remainder diagram to the case of arbitrary
remainder sequences as follows. To explain it, consider the example
for $s=3$ in \cref{fig:7}.

\begin{figure}[ht]
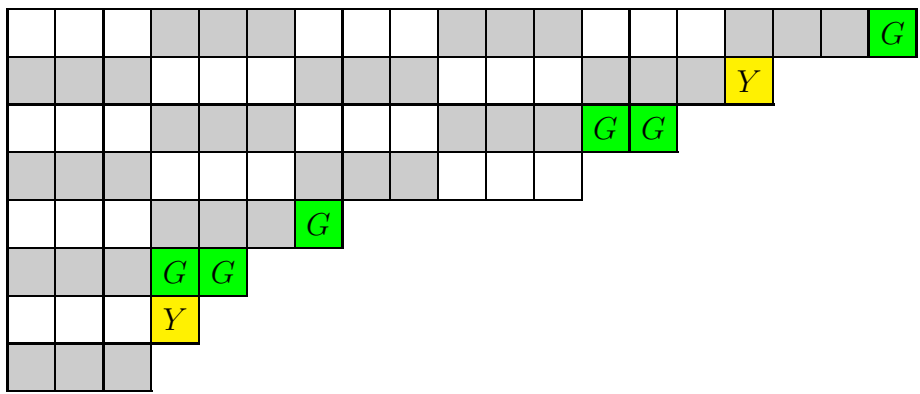

$$
\begin{ytableau}
\oc & \oc & \oc &
\ec & \ec & \ec & \oc & \oc & \oc & \ec & \ec & \ec & \oc & \oc & \oc & \ec & \ec & \ec & \gc \\
\ec & \ec & \ec & \oc & \oc & \oc & \ec & \ec & \ec & \oc & \oc & \oc & \ec & \ec & \ec & \yc \\
\oc & \oc & \oc & \ec & \ec & \ec & \oc & \oc & \oc & \ec & \ec & \ec & \gc & \gc \\
\ec & \ec & \ec & \oc & \oc & \oc & \ec & \ec & \ec & \oc & \oc & \oc  \\
\oc & \oc & \oc & \ec & \ec & \ec & \gc  \\
\ec & \ec & \ec & \gc & \gc  \\
\oc & \oc & \oc & \yc  \\
\ec & \ec & \ec
\end{ytableau}
$$
\caption{Green and yellow remainders in a partition}
\label{fig:7}
\end{figure}
Consider the \hbox{$i$-th} remainder from the bottom~(!), $i\geq 1$.  This
remainder is marked green if
$c_s(\Delta^i_s \lambda) = c_s(\Delta^{i-1}_s \lambda)+1$, and  it is
marked yellow if
$c_s(\Delta^i_s \lambda) = c_s(\Delta^{i-1}_s \lambda)$ (cf. 
\cref{delete}). The only
exception from this rule is a non-zero remainder in the top row,
which is always marked green; see \cref{fig:7}.

For a partition $\lambda$ let, as before, $\gout_s(\lambda)$ be the
set of green cells that correspond to the green remainders, and let $\yout_s(\lambda)$ be the set of yellow
cells that correspond to the yellow remainders.
Yellow cells are also located outside of the $s$-reduction;
in their row, they are adjacent to the final cell  of the $s$-reduction, 
however, they need not be outer corners of the $s$-reduction. In the following, we sometimes refer to the green and the yellow cells as 
the coloured cells.

The Ferrers diagram of $\nu=\lab{s}$ together
with the yellow and green cells is the \Dfn{(extended) remainder
  diagram $\nu^+_s(\lambda)$ for~$\lambda$}.  For the example above,
  the remainder diagram is shown in \cref{fig:8}.

\begin{figure}[ht]
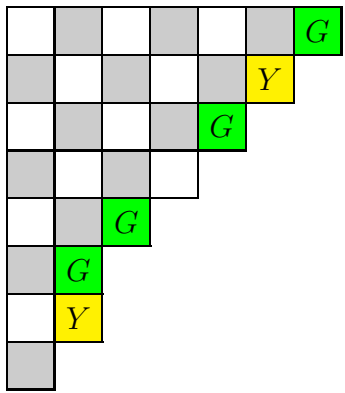

$$
\begin{ytableau}
\oc & \ec & \oc & \ec & \oc & \ec &  \gc \\
\ec & \oc & \ec & \oc & \ec &  \yc \\
\oc & \ec & \oc & \ec &  \gc \\
\ec & \oc & \ec & \oc  \\
\oc & \ec & \gc \\
\ec & \gc \\
\oc & \yc \\
\ec
\end{ytableau}
$$
\caption{The (extended) remainder diagram for the partition in
\cref{fig:7}}
\label{fig:8}
\end{figure}

More generally, an \Dfn{(extended) remainder diagram} $\nu^+$ is a partition
$\nu$ together with a collection of green cells $\gout(\nu^+)$ and a
collection of yellow cells $\yout(\nu^+)$, none of them in $\nu$,
provided the following three conditions are met:
\begin{itemize}
\item Green cells are outer corners of $\nu$.
\item Yellow cells are located at the end of a (possibly empty) row of $\nu$.
\item The cell at the end of the row preceding a row with a yellow
  cell is always a coloured cell of $\nu$ (cf. the first case in 
  \cref{delete}).  In particular, a coloured cell in
  the top row must be green.
\end{itemize}

A remainder diagram with coloured cells in rows $\pos$ is \Dfn{compatible} with a vector $\rem$ of integers between $1$ and~$s-1$ provided that for any weak descent $\rho_{k-1}\geq\rho_{k}$
of $\rem$ the coloured cell in row $\posi_k$ is yellow if and only if
$\posi_{k-1} = \posi_k - 1$.

We can now express the statistics $r_s$ and $c_s$ in terms of the
(extended) remainder diagram, thus generalizing \cref{lem:increasing}.
\begin{lemma}\label{lem:statistics-remainder-diagram}
  Let $\lambda$ be a partition with remainder diagram
  $\nu^+_s(\lambda)$.  Then
  \begin{align*}
    r_s(\lambda) &= \# \text{ of rows of } \nu^+_s(\lambda)-|\gout_s(\lambda)|-|\yout_s(\lambda)|, \\
    c_s(\lambda) &= \# \text{ of columns of } \nu^+_s(\lambda)-|\gout_s(\lambda)|.
  \end{align*}
\end{lemma}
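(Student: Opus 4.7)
The plan is to prove the two identities separately. For the first I would observe that $\#\text{rows}(\nu^+_s(\lambda))=\ell(\lambda)$: row~$i$ of $\nu^+_s(\lambda)$ is non-empty iff $\lambda_i>0$, since either $\lambda_i\ge s$ (the interior row is non-empty) or $0<\lambda_i<s$ (the interior row is empty but carries a coloured cell). Since the coloured cells biject with the non-zero remainders of~$\lambda$, this gives $\ell(\lambda)=r_s(\lambda)+|\gout_s(\lambda)|+|\yout_s(\lambda)|$, and the first equation follows at once.

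For the second identity I would induct on the length~$m$ of the remainder sequence. The base case $m=0$ is the computation in \cref{sec:empty}: when every part of~$\lambda$ is a multiple of~$s$, the leg-zero cells with arm$+1$ divisible by~$s$ are exactly the leg-zero cells in columns $\equiv 1\pmod s$, and these biject with the columns of $\lab{s}=\nu^+_s(\lambda)$; together with $|\gout_s(\lambda)|=0$, this yields the identity. For the inductive step I would compare $\nu^+_s(\lambda)$ and $\nu^+_s(\Delta_s\lambda)$. Both have the same interior~$\lab{s}$, and their colourings of the cells in rows $\posi_1,\dots,\posi_{m-1}$ coincide, since those colours are determined by $c_s$-differences along the common tail $\Delta_s\lambda,\Delta_s^2\lambda,\dots$ of the $\Delta_s$-chains of~$\lambda$ and $\Delta_s\lambda$. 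Consequently the two diagrams differ precisely by the coloured cell in row~$\posi_m$.

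The inductive step then splits into three cases, guided by \cref{delete}:
\emph{(i)} If the cell in row~$\posi_m$ is yellow, then $\posi_{m-1}=\posi_m-1$ and $\rho_{m-1}\ge\rho_m$; hence $c_s$, $|\gout_s|$, and the column count are all unchanged, the last because the yellow cell at column $(\lab{s})_{\posi_m}+1$ is dominated by row~$\posi_{m-1}$ of~$\nu^+_s(\lambda)$, which reaches column $(\lab{s})_{\posi_{m-1}}+1\ge(\lab{s})_{\posi_m}+1$.
\emph{(ii)} If the cell is green with $\posi_m=1$ (the exceptional top-row case, forcing $m=1$), then $c_s$ is unchanged while $|\gout_s|$ and $\#\text{cols}$ each increase by one, since the green cell extends row~$1$.
\emph{(iii)} If the cell is green with $\posi_m>1$, then $c_s$ decreases by one relative to $\Delta_s\lambda$, $|\gout_s|$ increases by one, and $\#\text{cols}$ is unchanged because row~$1$ is untouched and governs the column count; here I would use that $\lceil\lambda_i/s\rceil$ is weakly decreasing in~$i$, so the uncoloured silhouette of $\nu^+_s(\lambda)$ is a genuine partition.
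In each case the inductive hypothesis then delivers the desired identity for~$\lambda$.

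The hardest part I anticipate is case~(i): one must combine the compatibility rule from the definition of the remainder diagram (which forces a coloured cell directly above any yellow one) with the inequality $\lambda_{\posi_{m-1}}\ge\lambda_{\posi_m}$ to see that the yellow cell never extends the maximum row length. Once this is settled, together with the observation that $\#\text{cols}(\nu^+_s(\lambda))$ is always attained in row~$1$ whenever $\posi_m>1$, the case checks reduce to routine bookkeeping.
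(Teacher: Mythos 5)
Your proof is correct and follows essentially the same route as the paper: the row identity via the correspondence between coloured cells and parts not divisible by $s$, and the column identity by peeling off the last remainder with $\Delta_s$ and invoking \cref{delete}, which is exactly how the green/yellow marking is defined. You merely make explicit the column-count bookkeeping (the row lengths of $\nu^+_s(\lambda)$ are $\lceil\lambda_i/s\rceil$, hence weakly decreasing, so the yellow or non-top-row green cell never increases the number of columns) that the paper leaves implicit in its ``analogous to \cref{lem:increasing}'' argument.
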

\begin{proof}
The proof is analogous to that of \Cref{lem:increasing}. Note that coloured cells correspond to the parts of $\lambda$ that are not divisible by $s$. 
Furthermore, by \cref{delete}, yellow cells identify all but the first row where the statistic~$c_s$ remains unchanged when cells are removed from that row through successive applications of $\Delta_s$.
\end{proof}

Next we describe a bijection between remainder diagrams compatible
with a given remainder sequence and remainder diagrams without yellow
cells.  To state it precisely, inspired by
\Cref{lem:statistics-remainder-diagram} we define for any remainder
diagram $\nu^+$ the two statistics
\begin{align*}
  r(\nu^+) &= \# \text{ of rows of } \nu^+ -
             |\gout(\nu^+)|-|\yout(\nu^+)|\quad\text{and}\\
  c(\nu^+) &= \# \text{ of columns of } \nu^+ - |\gout(\nu^+)|.
\end{align*}
The following construction is a translation of the combinatorial
proof of \cref{maj}.
\begin{construction}[\sc Reduction to remainder diagrams without yellow
  cells]
  \label{reduce}
  Let $\lambda$ be a partition with remainder sequence
  $\rem=\rem_s(\lambda)$ and remainder diagram $\nu^+_s(\lambda)$.

\begin{enumerate}[label=\textnormal{({\arabic*})},resume]
\item\label{S1} Initialization: We let $k:=1$,
  $\nu^+:=\nu_s^+(\lambda)$,
  and $\nu:=\lab{s}$.
\item\label{S2}If $k$ equals the length of $\rem$ then go to~\ref{S5}.
If not and if there is a weak descent of $\rem$ at $k$, i.e., if
$\rho_{k} \ge \rho_{k+1}$, then go to~\ref{S3}. Otherwise increase $k$
  by~$1$ and repeat \ref{S2} with this new value of~$k$.
\item By construction, all coloured cells strictly
  above row $\posi_{k+1}$ in the diagram~$\nu^+$ are already green and
  thus outer corners of~$\nu$.

\begin{itemize}
\item[\em(3A)]\label{S3}
  If the coloured cell in row~$\posi_{k+1}$
  is green, then the next green cell above is
  not in row $\posi_{k+1}-1$, i.e., $\posi_{k} < \posi_{k+1}-1$
(cf.\ the definition of the (extended) remainder diagram and
\cref{delete}).  We add 
  the outer corners of~$\nu$ in rows $\posi_1,\dots,\posi_{k}$ to~$\nu$
  and add for each of them a green
  cell to $\nu^+$ in the row below.

\item[\em(3B)]\label{S4} If the coloured cell in row~$\posi_{k+1}$ is yellow, then
  we delete this coloured cell from~$\nu^+$. 
  In this case, the next coloured cell above is in row $\posi_{k+1}-1$,
  i.e., $\posi_{k}= \posi_{k+1}-1$, and all coloured cells above row~$\posi_{k+1}$ are outer corners. 
  We add the (coloured) outer corners in rows
  $\posi_1,\dots,\posi_{k}$ to~$\nu$
  and add for each of them a green cell to~$\nu^+$ in the row
  below\footnote{Note that this has the effect that the former yellow
    cell in row $\posi_{k+1}$ is replaced by a green cell.}.
  Finally, we add a green cell to~$\nu^+$ in the first row.
\end{itemize}
Increase $k$ by $1$ and go to \ref{S2}.
\item\label{S5} The output of the algorithm is the remainder
  diagram~$\nu^+$ with interior~$\nu$.
\end{enumerate}
\end{construction} 

We illustrate this construction with the help of the example in
\cref{fig:7} with remainder diagram in \cref{fig:8}. 
In this case, the row position sequence is $(1,2,3,5,6,7)$ and
the remainder sequence is $(1,1,2,1,2,1)$. Hence
we have weak descents of the remainder sequence 
at $k=1$, $3$, and $5$. The sequence of pairs $(\nu^+,\nu)$
we obtain when applying the algorithm of \Cref{reduce} is 
shown in \cref{fig:9}. There, the white and shaded cells form the
partitions~$\nu$, while the complete diagrams --- including the green
and yellow cells --- form the partitions~$\nu^+$.
Note that the final remainder diagram is not compatible
with the original remainder sequence.
\begin{figure}[ht]
\ytableausetup{smalltableaux}
$$
\begin{ytableau}
\oc & \ec & \oc & \ec & \oc & \ec &  \gc \\
\ec & \oc & \ec & \oc & \ec &  \yc \\
\oc & \ec & \oc & \ec &  \gc \\
\ec & \oc & \ec & \oc  \\
\oc & \ec & \gc \\
\ec & \gc \\
\oc & \yc \\
\ec
\end{ytableau}
\quad \mapsto\quad 
\begin{ytableau}
\oc & \ec & \oc & \ec & \oc & \ec &  \oc &  \gc \\
\ec & \oc & \ec & \oc & \ec &  \gc \\
\oc & \ec & \oc & \ec &  \gc \\
\ec & \oc & \ec & \oc  \\
\oc & \ec & \gc \\
\ec & \gc \\
\oc & \yc \\
\ec
\end{ytableau}
\quad \mapsto\quad 
\begin{ytableau}
\oc & \ec & \oc & \ec & \oc & \ec &  \oc & \ec \\
\ec & \oc & \ec & \oc & \ec &  \oc & \gc \\
\oc & \ec & \oc & \ec &  \oc & \gc \\
\ec & \oc & \ec & \oc & \gc  \\
\oc & \ec & \gc \\
\ec & \gc \\
\oc & \yc \\
\ec
\end{ytableau}
\quad \mapsto\quad 
\begin{ytableau}
\oc & \ec & \oc & \ec & \oc & \ec &  \oc & \ec & \gc \\
\ec & \oc & \ec & \oc & \ec &  \oc & \ec \\
\oc & \ec & \oc & \ec &  \oc & \ec & \gc\\
\ec & \oc & \ec & \oc & \ec & \gc  \\
\oc & \ec & \oc & \gc \\
\ec & \oc & \gc \\
\oc & \gc \\
\ec
\end{ytableau}
$$
\ytableausetup{nosmalltableaux}
\caption{Application of \cref{reduce}}
\label{fig:9}
\end{figure}

The following lemma confirms that \cref{reduce} has all the required
properties such that it indeed achieves the desired reduction
to the case of remainder diagrams without yellow cells.

\begin{lemma}\label{lem:constr3}  
For any positive integer~$s$, \cref{reduce} yields a bijection between partitions $\lambda$ and remainder diagrams~$\nu^+$ without yellow cells, satisfying $r(\nu^+) = r_s(\lambda)$ and $c(\nu^+) = c_s(\lambda)$, and whose interior has $\wmaj(\rem_s(\lambda))$ more cells than $\lab{s}$.
\end{lemma}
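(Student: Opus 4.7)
My plan is to verify the three assertions of the lemma---bijectivity, the identities $r(\nu^+)=r_s(\lambda)$ and $c(\nu^+)=c_s(\lambda)$, and the size increment $|\nu|-|\lab{s}|=\wmaj(\rem_s(\lambda))$---by induction on the iteration count of the algorithm in \cref{reduce}. The initial configuration $(\nu_s^+(\lambda),\lab{s})$ is a valid remainder diagram by construction, so the core of the argument is a case analysis of a single iteration of the branching step of the algorithm. Along the way I would confirm that yellow cells can only arise at weak descents with $\posi_{k-1}=\posi_k-1$ and are always removed by case~(3B) as they are encountered, so the final output is yellow-free.

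The per-iteration analysis runs as follows. In case~(3A), the $k$ absorbed green cells at rows $\posi_1,\dots,\posi_k$ are replaced by $k$ new green cells at the new outer corners in rows $\posi_1+1,\dots,\posi_k+1$; since the first row is untouched, the numbers of rows and columns of $\nu^+$ are unchanged, $|\gout|$ and $|\yout|$ are unchanged, and $|\nu|$ grows by exactly $k$. In case~(3B), additionally the yellow cell in row $\posi_{k+1}$ is removed and a green cell is appended to the first row, which remains the longest row of the updated interior---a fact to be verified separately using the weakly decreasing property of $\nu$. Consequently $|\gout|$ and the number of columns each increase by $1$, $|\yout|$ decreases by $1$, the number of rows is unchanged, and $|\nu|$ again grows by $k$. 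In both cases the statistics $r(\nu^+)=\#\text{rows}-|\gout|-|\yout|$ and $c(\nu^+)=\#\text{columns}-|\gout|$ are preserved, so combined with \cref{lem:statistics-remainder-diagram} applied to $\nu_s^+(\lambda)$ we obtain the equalities $r(\nu^+)=r_s(\lambda)$ and $c(\nu^+)=c_s(\lambda)$. Summing the per-iteration increment over all weak descents of $\rem_s(\lambda)$ yields $\sum_{k\colon\rho_k\ge\rho_{k+1}}k=\wmaj(\rem_s(\lambda))$.

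For bijectivity I would describe an explicit inverse. Starting from a remainder diagram $\nu^+$ without yellow cells, one processes the weak descents of $\rem$ in reverse order (from the largest $k$ to the smallest); at each such step, one inspects the current first row of $\nu^+$. If it contains a green cell, one reverses case~(3B) by deleting that green, shifting the next $k$ coloured cells of the current diagram one row upward, and inserting a yellow cell at the now-empty row $\posi_{k+1}$; otherwise one reverses case~(3A) by simply shifting the first $k$ coloured cells one row upward. The criterion is well-defined because a forward application of~(3B) always leaves a green cell in the first row, whereas a forward application of~(3A) always leaves the first row without a green cell (the row-$1$ green is either absent to begin with or absorbed into $\nu$). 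The main obstacle is to verify in detail that this reverse procedure recovers exactly $\nu_s^+(\lambda)$, but this follows from the local reversibility of each forward step, which essentially mirrors the combinatorial transformations in the proof of \cref{maj}.
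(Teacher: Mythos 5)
Your proof follows essentially the same route as the paper's: per-step bookkeeping showing that the number of rows and the total number of coloured cells are invariant, that the column count is unchanged in step (3A) while in step (3B) it and the number of green cells each grow by one, that each descent step adds $k$ interior cells (summing to $\wmaj(\rem_s(\lambda))$), and that the two cases can be told apart when inverting by whether the first row of the image contains a green cell. One small inaccuracy worth fixing: in case (3A) the first row is \emph{not} always untouched --- if the first coloured cell sits in row $1$ (for instance after an earlier (3B) step), its green cell is absorbed into the interior; the column count is nevertheless unchanged, since that cell merely changes status within the same column, and indeed your own description of the inverse already acknowledges this possibility.
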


\begin{proof} 
Let $\lambda$ be a partition with remainder sequence
	$\rem=\rem_s(\lambda)$ and remainder diagram $\nu^+_s(\lambda)$. To see that the interior of $\nu$ has increased by $\wmaj(\rem)$ after applying \cref{reduce}, note
that in both~(3A) and~(3B) we add $k$ cells to the interior of the remainder diagram, which is precisely the contribution of the weak
descent at position~$k$ to the weak major index of $\rem$.  

To see that $r(\nu^+) = r_s(\lambda)$ and
  $c(\nu^+) = c_s(\lambda)$, note that the total
number of cells which are either green or yellow and also the number
of rows do not change.  In a step~(3A),
the number of columns
does not change either.  In a step~(3B),
the number of columns
increases by one, and so does the number of green cells.

Each step of the construction is invertible, since we can determine
from the image in which of steps~(3A) or~(3B) we were: there is a green
cell in the first row of the image if and only if the coloured
cell in row $\posi_{k+1}$ in the preimage is yellow.
\end{proof} 

In the example in \cref{fig:9}, applying the inverse of \Cref{reduce} to the
conjugate of the last diagram with respect to the remainder
sequence $(1,1,2,1,2,1)$, we obtain the sequence of diagrams in \cref{fig:20}.

\begin{figure}[ht]
\ytableausetup{smalltableaux}
$$
\begin{ytableau}
\oc & \ec & \oc & \ec & \oc & \ec &  \oc & \ec  \\
\ec & \oc & \ec & \oc & \ec &  \oc & \gc \\
\oc & \ec & \oc & \ec &  \oc & \gc  \\
\ec & \oc & \ec & \oc & \gc    \\
\oc & \ec & \oc & \ec \\
\ec & \oc & \ec & \gc  \\
\oc & \ec & \gc  \\
\ec \\
\gc 
\end{ytableau}
\quad \mapsto\quad 
\begin{ytableau}
\oc & \ec & \oc & \ec & \oc & \ec &  \oc & \gc  \\
\ec & \oc & \ec & \oc & \ec &  \gc \\
\oc & \ec & \oc & \ec &  \gc  \\
\ec & \oc & \ec & \oc     \\
\oc & \ec & \oc & \gc \\
\ec & \oc & \gc   \\
\oc & \ec  \\
\ec \\
\gc 
 \end{ytableau}
\quad \mapsto\quad 
\begin{ytableau}
\oc & \ec & \oc & \ec & \oc & \ec &  \gc  \\
\ec & \oc & \ec & \oc & \gc \\
\oc & \ec & \oc & \ec   \\
\ec & \oc & \ec & \gc     \\
\oc & \ec & \oc & \yc \\
\ec & \oc & \gc   \\
\oc & \ec  \\
\ec \\
\gc 
 \end{ytableau}
\quad  \mapsto\quad 
\begin{ytableau}
\oc & \ec & \oc & \ec & \oc & \gc  \\
\ec & \oc & \ec & \oc & \yc \\
\oc & \ec & \oc & \ec   \\
\ec & \oc & \ec & \gc     \\
\oc & \ec & \oc & \yc \\
\ec & \oc & \gc   \\
\oc & \ec  \\
\ec \\
\gc 
 \end{ytableau}
$$
\ytableausetup{nosmalltableaux}
\caption{Application of the inverse of  \cref{reduce}}
\label{fig:20}
\end{figure}

\medskip
We can now put together the preceding constructions to obtain a bijection for the general case.
\begin{construction} \label{general}
  Let $\lambda$ be a partition and $\rem=\rem_s(\lambda)$ its remainder sequence.
  \begin{itemize}
  \item We apply \Cref{reduce} to $\nu^+_s(\lambda)$ with respect to $\rem$ to obtain a remainder diagram $\mu^+$ without yellow cells.
  \item We apply the inverse of \Cref{reduce} to $[\mu^+]'$ with respect to $\rem$ to obtain a remainder diagram $\kappa^+$.
  \item We transform $\kappa^+$ into a partition by applying the $s$-blow-up to the interior of the diagram and replacing the coloured cells by the remainders of the original partition to obtain $\kappa^+\leftarrow_s\rem$ (compare with \cref{strict}).
  \end{itemize}
\end{construction}

Note that the resulting partition is again compatible with $\rem$ by
construction.

\begin{example} 
We apply \cref{general} to the partition in \cref{fig:7}. Its remainder diagram is displayed in \cref{fig:8}. The application of \Cref{reduce} to the remainder diagram is performed in \cref{fig:9}. Let 
$\mu^+$ denote the resulting remainder diagram.
The application of the inverse of \Cref{reduce} to  $[\mu^+]'$ is performed in \cref{fig:20}. After applying the $s$-blow-up to the interior of the diagram and putting the remainders back, we obtain the partition in \cref{fig:30}.

\begin{figure}[ht]
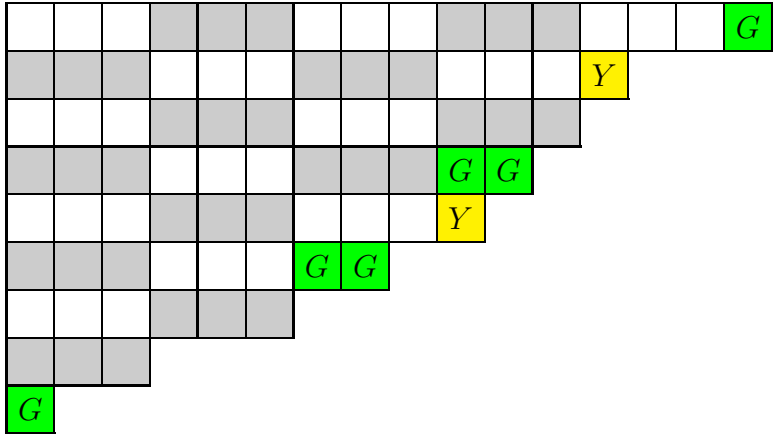

$$
\begin{ytableau}
\oc & \oc & \oc &
\ec & \ec & \ec & \oc & \oc & \oc & \ec & \ec & \ec & \oc & \oc & \oc & \gc \\
\ec & \ec & \ec & \oc & \oc & \oc & \ec & \ec & \ec & \oc & \oc & \oc & \yc \\
\oc & \oc & \oc & \ec & \ec & \ec & \oc & \oc & \oc & \ec & \ec & \ec  \\
\ec & \ec & \ec & \oc & \oc & \oc & \ec & \ec & \ec & \gc & \gc   \\
\oc & \oc & \oc & \ec & \ec & \ec & \oc & \oc & \oc & \yc  \\
\ec & \ec & \ec & \oc & \oc & \oc &  \gc & \gc  \\
\oc & \oc & \oc & \ec & \ec & \ec   \\
\ec & \ec & \ec \\
\gc 
\end{ytableau}
$$
\caption{Partition obtained after applying the bijection summarized in \cref{general} to the partition in \cref{fig:7}} 
\label{fig:30}
\end{figure}
\end{example}

\begin{remark}
The partitions of 37 in \cref{ex:1} appear in the order as ``dictated"
by the involution in \cref{general}. 
To be precise, if one applies \cref{general} to 
the partitions in \cref{fig:2} then the output partitions are the
ones in \cref{fig:3}, in the given order.
\end{remark}

\section{Proof of \cref{coefficient}}
\label{sec:5}
By \cref{main}, the generating functions in 
\cref{zero,,inc,,gen,,theogen} are all symmetric in
$R$ and $C$, however this is not visible from the formulas. We transform the formula in \cref{theogen} and extract the coefficient
of $R^r C^c$ to obtain a form where the symmetry in $R$ and $C$ is
obvious. The result is stated in \cref{coefficient} (with the
symmetric rewriting of the formula given in
\cref{rem:1a},
and the
content of this 
section is its proof. We start with a proof by computation and provide a combinatorial proof afterwards.

\begin{proof}[First proof]
First note that we can extend the sum over $i$ in the formula in
\cref{theogen} over all $i \ge 0$ since $\left[ \begin{smallmatrix}i-1
\\ m-1\end{smallmatrix} \right]_Q=0$ if $0 \le i < m$. We neglect the
prefactor $q^{|\rem|} Q^{-\wmaj(\rem)+\binom{m}{2}}$ in the formula
since it is independent of $R$ 
and $C$, and we start by decomposing the sum over~$k$ in the formula in \cref{theogen} to get rid of the maximum as
\begin{multline}\label{EquationMultipleLines}
     \sum_{i \ge 0} Q^{i-m}
 \begin{bmatrix} i-1 \\ m-1 \end{bmatrix}_Q
\left( R^{i-m} + \sum_{k \ge 1}  \frac{C Q^{k}}{(C Q; Q)_k}
R^{\max(i-m,k-m)} \right)\\
=
\sum_{i \ge 0} (RQ)^{i-m}
 \begin{bmatrix} i-1 \\ m-1 \end{bmatrix}_Q
 + \sum_{k \ge 1}  \frac{C Q^{k}}{(C Q; Q)_k}
\sum_{i > k} (RQ)^{i-m}
 \begin{bmatrix} i-1 \\ m-1 \end{bmatrix}_Q \\
 + \sum_{k \ge 1}  \frac{C Q^{k}R^{k-m}}{(C Q; Q)_k}\sum_{i =0}^k Q^{i-m}
 \begin{bmatrix} i-1 \\ m-1 \end{bmatrix}_Q.
\end{multline}
We rewrite the first term as
$$
 \sum_{i \ge m} (RQ)^{i-m}
 \begin{bmatrix} i-1 \\ m-1 \end{bmatrix}_Q= \sum_{i \ge m} (RQ)^{i-m} \frac{(Q^{m};Q)_{i-m}}{(Q;Q)_{i-m}}
 =\sum_{i\geq 0}(RQ)^i\frac{(Q^  {m};Q)_{i}}{(Q;Q)_{i}}.
 $$
By the $Q$-binomial theorem (cf.\
\cite[Eq.~(1.3.2); Appendix (II.3)]{MR2128719}) the last sum evaluates
to
$$
\frac {(RQ^{m+1};Q)_\infty} {(RQ;Q)_\infty}
=\frac {1} {(RQ;Q)_m}.
$$ 
Thus, we arrive at the expression
\begin{multline}\label{GFSplitIntoPartsWithoutMax}
 \frac{1}{(RQ;Q)_m}
 + \sum_{k \ge 1}  \frac{C Q^{k}}{\prod_{i=1}^{k}(1-C Q^{i})}
\sum_{i > k} (RQ)^{i-m}
 \begin{bmatrix} i-1 \\ m-1 \end{bmatrix}_Q \\
 + \sum_{k \ge 1}  \frac{C Q^{k}R^{k-m}}{\prod_{i=1}^{k}(1-C Q^{i})}\sum_{i =0}^k Q^{i-m}
 \begin{bmatrix} i-1 \\ m-1 \end{bmatrix}_Q.
\end{multline}
Next we extract the coefficient of $R^rC^c$. In order to do so, we will make use of the simple expansion
$$\frac{1}{(z;Q)_k}=\sum_{l\geq 0}\begin{bmatrix} l+k-1 \\ l \end{bmatrix}_Q z^l.$$
We start with the case $c=0$. Making use of the expansion above, we see that the coefficient of $R^r$ in \eqref{GFSplitIntoPartsWithoutMax} is
\begin{equation}\label{CoefficientR}
    Q^r \begin{bmatrix} r+m-1 \\ r\end{bmatrix}_Q.
\end{equation}
If we let $c\geq 1$, making again use of the expansion above, we see that the coefficient of $R^rC^c$ in \eqref{GFSplitIntoPartsWithoutMax} equals
\begin{equation}\label{CoefficientofRC}
Q^r\begin{bmatrix} r+m-1 \\ m-1 \end{bmatrix}_Q\sum_{k=1}^{r+m-1}Q^{k+c-1}\begin{bmatrix} c+k-2 \\ c-1 \end{bmatrix}_Q
    +Q^{r+m+c-1}\begin{bmatrix} r+c+m-2 \\ c-1 \end{bmatrix}_Q \sum_{i=m}^{r+m}Q^{i-m}\begin{bmatrix} i-1 \\ m-1 \end{bmatrix}_Q.
\end{equation}
Using the simple summation
\begin{equation}\label{eq:simplesum}
\sum_{k=1}^NQ^{k-1}\begin{bmatrix} M+k-1 \\ M \end{bmatrix}_Q=\begin{bmatrix} M+N \\ M+1 \end{bmatrix}_Q,
\end{equation}
the first expression in \eqref{CoefficientofRC} can be evaluated to
$$
    Q^r\begin{bmatrix} r+m-1 \\ m-1 \end{bmatrix}_Q\sum_{k=1}^{r+m-1}Q^{k+c-1}\begin{bmatrix} c+k-2 \\ c-1 \end{bmatrix}_Q
    =Q^{r+c}\begin{bmatrix} r+m-1 \\ m-1 \end{bmatrix}_Q\begin{bmatrix} r+c+m-2 \\ c\end{bmatrix}_Q.
$$
Using the same summation~\eqref{eq:simplesum}, we also
see that
\[
\sum_{i=m}^{r+m}Q^{i-m}\begin{bmatrix} i-1 \\ m-1 \end{bmatrix}_Q = \begin{bmatrix} r+m \\ m \end{bmatrix}_Q.
\]
Putting everything back together, we see that \eqref{CoefficientofRC} equals
\begin{equation*}
 Q^{r+c}\begin{bmatrix} r+m-1 \\ m-1 \end{bmatrix}_Q\begin{bmatrix} r+c+m-2 \\ c\end{bmatrix}_Q
    +Q^{r+c+m-1}\begin{bmatrix} r+c+m-2 \\ c-1 \end{bmatrix}_Q \begin{bmatrix} r+m \\ m \end{bmatrix}_Q,
\end{equation*}
which, aside from the neglected prefactor $q^{|\rem|}
Q^{-\wmaj(\rem)+\binom{m}{2}}$, is exactly the expression in
\cref{coefficient}. 
\end{proof}

\begin{proof}[Second proof]

By \cref{lem:constr3}, it suffices to show that the generating function with respect to the weight $Q^{\# \text{ of interior cells}}$ of remainder diagrams without yellow cells, where the number of rows is $r+m$ and the number of columns is $c+m$ (including rows and columns of green cells), is given by
\begin{multline}\label{EquationForCombInterpretation}
	Q^{0+1+\dots+m-2}Q^{r+c+m-1}\begin{bmatrix} r+m-1 \\ m-1 \end{bmatrix}_Q\begin{bmatrix} r+c+m-2 \\ c\end{bmatrix}_Q \\
	+Q^{0+1+\dots+m-1}Q^{r+m+c-1}\begin{bmatrix} r+c+m-2 \\ c-1 \end{bmatrix}_Q \begin{bmatrix} r+m \\ m \end{bmatrix}_Q.
\end{multline}

We distinguish between two cases.

\medskip
\noindent
{\sc Case~1}: {\sc the bottom row of the remainder diagram contains an
interior cell.} 
 We claim that this case is covered by the second summand in
    \eqref{EquationForCombInterpretation}. To see this, consider
    $\lab{s}$, and let us decompose it as follows. First we cut out the
    columns of the green cells in~$\lab{s}$.  This gives us
    $m$ columns of distinct lengths where the largest column has at most
    $r+m-1$ boxes, the smallest one being allowed to be empty,
    since  $\lab{s}$ has $r+m$ rows and no green cell is
    below the last row of~$\lab{s}$. 
     
     We illustrate this with the example in \cref{fig:5}, with the remainder diagram given in \cref{fig:6}. 
   Note that we have   $m=3,c=2,r=3$ in this case.
     In \cref{eq:1},
    the $m$ columns of distinct lengths appear as the first shape on the right-hand
    side. The corresponding generating function is
    \textcolor{leftcell}{{$Q^{0+1+\dots+m-1}\left[\begin{smallmatrix}
    r+m \\ m\end{smallmatrix} \right]_Q$}}, where here and in the following 
    the colours of the expressions hint at the corresponding parts of the Ferrers diagrams in the figures.
    
    Next we cut off the outer
    frame of the remaining partition, that is, all boxes of the first
    row and first column. This gives us $r+m-1+c$ boxes; compare with
    the second shape on the right-hand side of \cref{eq:1}.  These boxes are
    taken into account by \textcolor{middlecell}{$Q^{r+c+m-1}$}. What
    remains is a partition with at most $c-1$ columns of size at most
    $r+m-1$, as in the final shape in \cref{eq:1}.  The corresponding 
generating
    function is given by the remaining factor
    \textcolor{rightcell}{{$\left[\begin{smallmatrix} r+c+m-2 \\
    c-1\end{smallmatrix} \right]_Q$}}. 

\begin{figure}[ht]
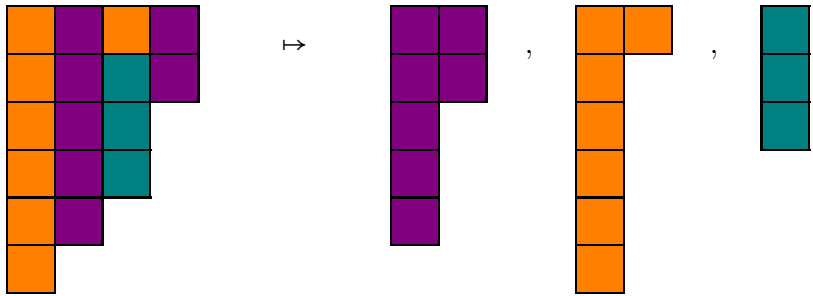

$$
    \begin{ytableau}
    \mc & \lc & \mc & \lc \\
    \mc & \lc & \rc & \lc \\
    \mc & \lc & \rc \\
    \mc & \lc & \rc \\
    \mc & \lc \\
    \mc \\
    \end{ytableau}
    \hspace{1cm}
    \mapsto
    \hspace{1cm}
    \begin{ytableau}
    \lc & \lc \\
    \lc  & \lc \\
    \lc \\
    \lc\\
    \lc \\
    \end{ytableau}
    \hspace{0.5cm}
    ,
    \hspace{0.5cm}
    \begin{ytableau}
    \mc & \mc \\
    \mc  \\
    \mc  \\
    \mc  \\
    \mc  \\
    \mc \\
    \end{ytableau}
    \hspace{0.5cm}
    ,
    \hspace{0.5cm}
    \begin{ytableau}
     \rc\\
     \rc \\
     \rc \\
    \end{ytableau}  
$$
\caption{The decomposition of the interior of the remainder diagram of \cref{fig:5} described in Case~1 of the second proof of \cref{coefficient}}
\label{eq:1}
\end{figure}

\medskip
\noindent
{\sc Case~2}: {\sc the bottom row of the remainder diagram consists
only of a green cell.} 
 There are $m-1$ green cells that are in the last row of
the $s$-reduced diagram or above. We claim that this case is covered
by the first summand in \eqref{EquationForCombInterpretation}. The
argument is analogous to the one above. Again we consider the
$s$-reduced diagram and decompose it as follows. We start by cutting
out the $m-1$ columns of the green cells different from the
bottommost green cell in~$\lab{s}$. This gives us $m-1$
columns of different lengths, where the largest has length at most
$r+m-2$, since $\lambda$ has $r+m-1$ rows and we only consider the
green cells different from the bottommost green cell. 

 We illustrate this with the example in \cref{fig:12}, with the remainder diagram given in \cref{fig:13}. 
 Note that we have  
 $m=2,c=3,r=5$ in this case.
In \cref{eq:2}, these $m-1$ columns of different lengths appear as the
first shape on the right-hand side. Here the generating function is
\textcolor{leftcell}{$Q^{0+1+\dots+m-2}\left[\begin{smallmatrix}
r+m-1 \\ m-1\end{smallmatrix} \right]_Q$}. From the remaining
partition we cut off the outer frame of $r+m-2+(c+1)$ boxes;
compare with the second shape on the right-hand side of \cref{eq:2}.
These boxes are taken into account by the factor
 \textcolor{middlecell}{$Q^{r+c+m-1}$}. What remains is a partition
with at most $c$ columns of length at most $r+m-2$, as in the final
shape in \cref{eq:2}.  The corresponding generating function is given
by the remaining factor of
\textcolor{rightcell}{$\left[\begin{smallmatrix}
r+c+m-2 \\ c \end{smallmatrix} \right]_Q$}. 
\begin{figure}[ht]
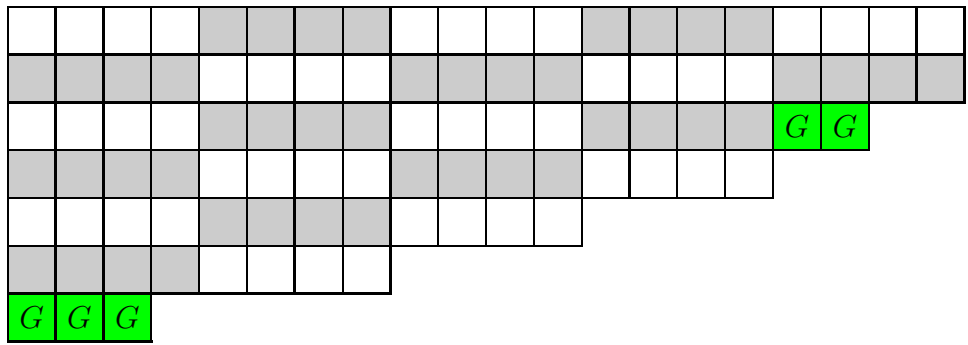

    $$
    \begin{ytableau}
    \oc & \oc & \oc & \oc &\ec & \ec & \ec & \ec & \oc & \oc & \oc & \oc & \ec & \ec & \ec & \ec & \oc & \oc & \oc & \oc \\
    \ec & \ec & \ec & \ec &\oc & \oc & \oc & \oc & \ec & \ec & \ec & \ec & \oc & \oc & \oc & \oc & \ec & \ec & \ec  & \ec \\
    \oc & \oc & \oc & \oc &\ec & \ec & \ec & \ec & \oc & \oc & \oc & \oc & \ec & \ec & \ec & \ec & \gc & \gc \\
    \ec & \ec & \ec & \ec &\oc & \oc & \oc & \oc & \ec & \ec & \ec & \ec & \oc & \oc & \oc & \oc \\
    \oc & \oc & \oc & \oc &\ec & \ec & \ec & \ec & \oc & \oc & \oc & \oc  \\
    \ec & \ec & \ec & \ec &\oc & \oc & \oc & \oc\\
    \gc & \gc & \gc \\
    \end{ytableau}
    $$
\caption{The example used to illustrate Case~2 of the second proof of  \cref{coefficient}}
\label{fig:12}
\end{figure}
\begin{figure}[ht]
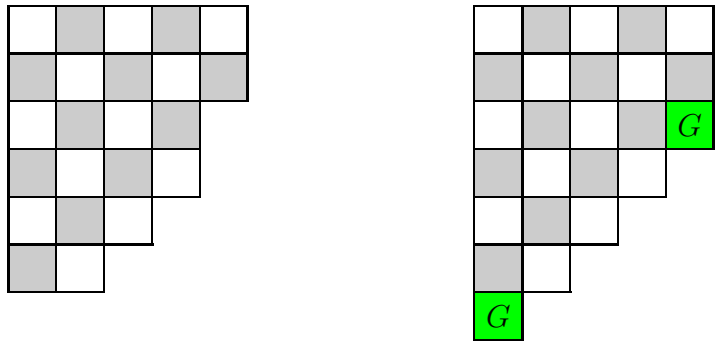

    $$
    \begin{ytableau}
    \oc &\ec & \oc & \ec & \oc \\
    \ec &\oc & \ec & \oc & \ec \\
    \oc &\ec & \oc & \ec \\
    \ec &\oc & \ec & \oc \\
    \oc & \ec & \oc \\
    \ec &\oc \\
    \end{ytableau}
    \hspace{3cm}
    \begin{ytableau}
    \oc &\ec & \oc & \ec & \oc\\
    \ec &\oc & \ec & \oc & \ec \\
    \oc &\ec & \oc & \ec & \gc \\
    \ec &\oc & \ec & \oc \\
    \oc &\ec & \oc \\
    \ec &\oc\\
    \gc \\
    \end{ytableau}
    $$
\caption{The remainder diagram of the partition in \cref{fig:12} (right) and its interior (left)}
\label{fig:13}
\end{figure}

\begin{figure}[ht]
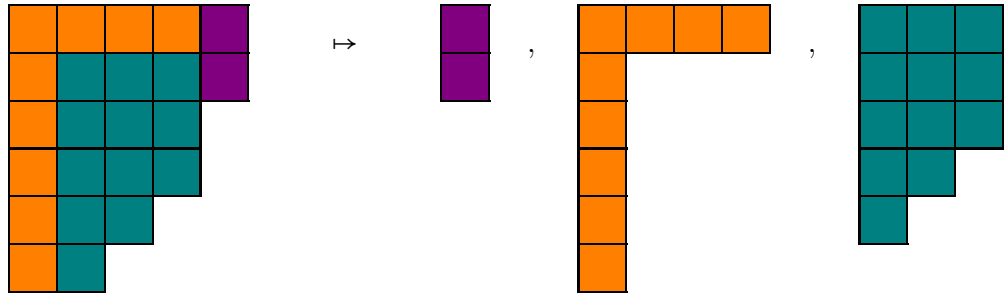

$$
    \begin{ytableau}
    \mc &\mc & \mc & \mc & \lc \\
    \mc &\rc & \rc & \rc & \lc \\
    \mc &\rc & \rc & \rc \\
    \mc &\rc & \rc & \rc \\
    \mc & \rc & \rc \\
    \mc &\rc \\
    \end{ytableau}
    \hspace{1cm}
    \mapsto
    \hspace{1cm}
    \begin{ytableau}
    \lc\\
    \lc\\
    \end{ytableau}
    \hspace{0.5cm}
    ,
    \hspace{0.5cm}
    \begin{ytableau}
    \mc &\mc & \mc & \mc\\
    \mc \\
    \mc \\
    \mc\\
    \mc \\
    \mc \\
    \end{ytableau}
     \hspace{0.5cm}
    ,
    \hspace{0.5cm}
    \begin{ytableau}
    \rc & \rc & \rc \\
    \rc & \rc & \rc \\
    \rc & \rc & \rc \\
    \rc & \rc \\
    \rc \\
    \end{ytableau}
$$
\caption{The composition of the interior of the remainder diagram of \cref{fig:12} described in Case~2 of the second proof of \cref{coefficient}}
\label{eq:2}
\end{figure}

\medskip

This completes the proof of the theorem.
\end{proof}

\begin{proof}[Proof of Corollary~\ref{rem:1a}]
To obtain the symmetric version of the generating function in \cref{coefficient}, we need to show that
\begin{multline}\label{eq:cor}
\begin{bmatrix} r+m-1 \\ m-1 \end{bmatrix}_Q\begin{bmatrix} r+c+m-2 \\ c\end{bmatrix}_Q
+Q^{m-1}\begin{bmatrix} r+m \\ m \end{bmatrix}_Q
\begin{bmatrix} r+c+m-2 \\ c-1 \end{bmatrix}_Q\\
= \frac {[r+c+m-1]_Q!} {[r]_Q!\,[c]_Q!\,[m-1]_Q!}
+Q^{m-1}\frac {[r+c+m-2]_Q!} {[r-1]_Q!\,[c-1]_Q!\,[m]_Q!}.
\end{multline}
The first summand on the left-hand side of \eqref{eq:cor} equals
\begin{multline*}
	\begin{bmatrix} r+m-1 \\ m-1 \end{bmatrix}_Q\begin{bmatrix} r+c+m-2 \\ c\end{bmatrix}_Q = \frac {[r+m-1]_Q!\,[r+c+m-2]_Q!} {[m-1]_Q!\,[r]_Q!\,[c]_Q!\,[r+m-2]_Q!}\\
	= \frac {[r+c+m-1]_Q!} {[m-1]_Q!\,[r]_Q!\,[c]_Q!} - \frac {[r+m-2]_Q!\,[r+c+m-1]_Q!-[r+m-1]_Q!\,[r+c+m-2]_Q!} {[m-1]_Q!\,[r]_Q!\,[c]_Q!\,[r+m-2]_Q!}\\
	= \frac {[r+c+m-1]_Q!} {[m-1]_Q!\,[r]_Q!\,[c]_Q!} - Q^{r+m-1}\frac {[r+c+m-2]_Q!} {[m-1]_Q!\,[r]_Q!\,[c-1]_Q!}.
\end{multline*}
Regarding the second summand, we analogously obtain
\begin{multline*}
	\begin{bmatrix} r+m \\ m \end{bmatrix}_Q
	\begin{bmatrix} r+c+m-2 \\ c-1 \end{bmatrix}_Q=\frac {[r+m]_Q!\,[r+c+m-2]_Q!} {[m]_Q!\,[r]_Q!\,[c-1]_Q!\,[r+m-1]_Q!}\\
	=\frac {[r+c+m-2]_Q!} {[r-1]_Q!\,[c-1]_Q!\,[m]_Q!} + Q^r \frac {[r+c+m-2]_Q!} {[r]_Q!\,[c-1]_Q!\,[m-1]_Q!}.
\end{multline*}
Putting it all together then yields the desired expression.
\end{proof}

\section*{Acknowledgement}
Four of the authors wish to express their gratitude to Deutsche Bahn
for delaying one of their trains on the way back from the 90th
S\'eminaire Lotharingien de Combinatoire, resulting in a nightly
stopover in a decent hotel in M\"unchen --- paid by Deutsche Bahn
--- during which this work was initiated.

\bibliographystyle{plain}
\bibliography{rc}

\begin{thebibliography}{10}

\bibitem{BF13}
Alexandr Buryak and Boris~Lvovich Feigin.
\newblock Generating series of the {P}oincar\'e{} polynomials of
  quasihomogeneous {H}ilbert schemes.
\newblock In {\em Symmetries, integrable systems and representations},
  volume~40 of {\em Springer Proc. Math. Stat.}, pages 15--33. Springer,
  Heidelberg, 2013.

\bibitem{BFN15}
Alexandr Buryak, Boris~Lvovich Feigin, and Hiraku Nakajima.
\newblock A simple proof of the formula for the {B}etti numbers of the
  quasihomogeneous {H}ilbert schemes.
\newblock {\em Int. Math. Res. Not. IMRN}, 2015(13):4708--4715, 2015.

\bibitem{MR1279073}
Robert~J. Clarke and Dominique Foata.
\newblock Eulerian calculus. {I}. {U}nivariable statistics.
\newblock {\em European J. Combin.}, 15(4):345--362, 1994.

\bibitem{MR1399503}
Robert~J. Clarke and Dominique Foata.
\newblock Eulerian calculus. {IV}. {S}pecializations.
\newblock {\em S\'{e}m. Lothar. Combin.}, 32(B32b):Art.~B32b, 12~pp., 1994.

\bibitem{MR1330539}
Robert~J. Clarke and Dominique Foata.
\newblock Eulerian calculus. {II}. {A}n extension of {H}an's fundamental
  transformation.
\newblock {\em European J. Combin.}, 16(3):221--252, 1995.

\bibitem{MR1337139}
Robert~J. Clarke and Dominique Foata.
\newblock Eulerian calculus. {III}. {T}he ubiquitous {C}auchy formula.
\newblock {\em European J. Combin.}, 16(4):329--355, 1995.

\bibitem{MR1399758}
Dominique Foata and Christian Krattenthaler.
\newblock Graphical major indices. {II}.
\newblock {\em S\'{e}m. Lothar. Combin.}, 34(B34k):Art.~B34k, 16~pp., 1995.

\bibitem{MR1418752}
Dominique Foata and Doron Zeilberger.
\newblock Graphical major indices.
\newblock {\em J. Comput. Appl. Math.}, 68(1-2):79--101, 1996.

\bibitem{zbMATH02703427}
Fabian Franklin.
\newblock On partitions.
\newblock J. {Hopkins} circ. {II}.72, 1883.

\bibitem{MR2128719}
George Gasper and Mizan Rahman.
\newblock {\em Basic hypergeometric series}, volume~96 of {\em Encyclopedia of
  Mathematics and its Applications}.
\newblock Cambridge University Press, Cambridge, second edition, 2004.
\newblock With a foreword by Richard Askey.

\bibitem{MR2475023}
Nicholas~A. Loehr and Gregory~S. Warrington.
\newblock A continuous family of partition statistics equidistributed with
  length.
\newblock {\em J. Combin. Theory Ser. A}, 116(2):379--403, 2009.

\bibitem{MR2267263}
Igor Pak.
\newblock Partition bijections, a survey.
\newblock {\em Ramanujan J.}, 12(1):5--75, 2006.

\bibitem{Vid23}
Eve Vidalis.
\newblock A combinatorial proof of {B}uryak--{F}eigin--{N}akajima.
\newblock {\em Electron. J. Combin.}, 30(3):Paper No. 3.28, 56~pp., 2023.

\bibitem{WW20}
Adam Walsh and S.~Ole Warnaar.
\newblock Modular {N}ekrasov--{O}kounkov formulas.
\newblock {\em S\'{e}m. Lothar. Combin.}, 81(B81c):Art. B81c, 28~pp., 2020.

\end{thebibliography}
\end{document}